\title{Factoring the Sobolev embedding operator}
\author{Krystian Kazaniecki, Aleksander Pawlewicz, Micha\l{} Wojciechowski}
\newtheorem{theorem}{Theorem}
\newtheorem*{theorem*}{Theorem}
\newtheorem{lemma}{Lemma}
\newtheorem{corollary}{Corollary}
\begin{document}
\maketitle

\begin{abstract}
The paper studies the factorization and summing properties of the Sobolev embedding operator. We propose two different approaches. One shows that the Sobolev embedding operator  $S:W^{1,1}(\mathbb{T}^2)\hookrightarrow L_2(\mathbb{T}^2)$ factorises through the identical embedding $\ell_\Phi\hookrightarrow\ell_2$ for some Young function with Matuszewska-Orlicz index 1. Proof of this fact is based on two results of independent interest. First, a necessary and sufficient conditions on a Young function $\Phi$ and weight $\Psi$ for boundedness of the embedding of the Sobolev space $W^{1,1}(\mathbb{T}^2)$ into Besov-Orlicz space $B^\Psi_{\Phi,1}(\mathbb{T}^2)$. Second, a generalization of the Marcinkiewicz sampling theorem to the context of Orlicz spaces. Another approach is based on the extrapolation of $(p,1)$-summing norm.
\end{abstract}

\let\thefootnote\relax\footnotetext{This research was partially supported by the National Science Centre, Poland, and Austrian Science Foundation FWF joint CEUS programme. National Science Centre project no. 2020/02/Y/ST1/00072 and FWF project no. I5231.}
\section{Introduction}
This paper is devoted to the study of absolutely summing properties of the Sobolev embedding operator. We generalise previously known results and, as a consequence, obtain new relaying on the theory of $(\Phi,1)$-summing operator developed in papers \cite{MaMa} and \cite{DeMaMi}. 
The starting point of our considerations is the following result concerning the operator ideal properties of the Sobolev embedding operator which was proved in \cite{Woj}:
\begin{theorem}\label{posit}
Let $d=2, 3, ...$; $k=1, 2, ..., d-1$; $1\leq p< 2$ and $p<d/k$. Then the Sobolev embedding $\mathcal{S}_{d,k,p}:W^{k,p}(\mathbb{T}^d)\hookrightarrow L_s(\mathbb{T}^d)$, where $1/s=1/p-k/d$, is $(v,1)$-absolutely summing for $v>\max (2d/(2k+d),p)$.
\end{theorem}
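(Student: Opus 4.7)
The plan is to prove Theorem~\ref{posit} by a dyadic-scale analysis: a Littlewood--Paley decomposition reduces $\pi_{v,1}(\mathcal{S}_{d,k,p})$ to uniform bounds on each frequency shell; the Marcinkiewicz--Zygmund sampling theorem factorises each shell-operator through a finite-dimensional identity $\ell_\infty^N\hookrightarrow\ell_s^N$; and the classical Bennett--Carl asymptotics for $(v,1)$-summing norms of such identities supply the decisive estimate. The threshold $2d/(2k+d)$ is expected to emerge precisely from summability across dyadic scales.

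Concretely, fix a smooth dyadic partition of unity $(\varphi_j)_{j\ge 0}$ on $\mathbb{Z}^d$ with $\operatorname{supp}\varphi_j\subset\{|\xi|\sim 2^j\}$ and set $P_j=\varphi_j(D)$. By subadditivity of $\pi_{v,1}$, the decomposition $\mathcal{S}_{d,k,p}=\sum_j\mathcal{S}_{d,k,p}\circ P_j$ reduces the task to bounding $\pi_{v,1}(\mathcal{S}_{d,k,p}\circ P_j)$ by a sequence summable in $j$. Each $P_jf$ is a trigonometric polynomial of degree $O(2^j)$, so by Marcinkiewicz--Zygmund sampling its $L_s$-norm is equivalent to the normalised $\ell_s$-norm of samples on a regular grid $\Lambda_j\subset\mathbb{T}^d$ of cardinality $N_j\asymp 2^{jd}$. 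This factorises $\mathcal{S}_{d,k,p}\circ P_j$ as $W^{k,p}\xrightarrow{A_j}\ell_\infty^{N_j}\xrightarrow{\iota_j}\ell_s^{N_j}\xrightarrow{R_j}L_s$, where $A_j$ is point evaluation, $\iota_j$ the identity, and $R_j$ trigonometric reconstruction. Bernstein's inequality $\|P_jf\|_\infty\lesssim 2^{jd/p}\|P_jf\|_p\lesssim 2^{j(d/p-k)}\|f\|_{W^{k,p}}$ combined with $1/s=1/p-k/d$ gives $\|A_j\|\lesssim 2^{jd/s}$, while sampling yields $\|R_j\|\lesssim 2^{-jd/s}$; the non-trivial scale factor is carried entirely by $\iota_j$.

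Next, I would invoke the sharp Bennett--Carl asymptotics for $\pi_{v,1}(\iota_j:\ell_\infty^{N_j}\to\ell_s^{N_j})$, producing a polynomial bound of the form $N_j^{\alpha(v,s)}$. Plugging this into the factorisation gives $\pi_{v,1}(\mathcal{S}_{d,k,p}\circ P_j)\lesssim 2^{j\beta}$ for an explicit exponent $\beta=\beta(v,p,k,d)$; a direct computation using $1/s=1/p-k/d$ shows that $\beta<0$ exactly when $v>2d/(2k+d)$. The auxiliary requirement $v>p$ is expected to appear separately, from controlling the weak-$\ell_1$ supremum in the definition of the $(v,1)$-summing norm in terms of $W^{k,p}$-duality, and in particular from the low-frequency/source-side contribution where no Bernstein gain is available.

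The principal technical obstacle is matching the Bernstein--sampling factorisation with the finite-dimensional $(v,1)$-summing estimate \emph{sharply} at the critical threshold: a coarser factorisation (for instance, through Maurey's extrapolation based on cotype of $L_s$) introduces a logarithmic loss and misses the exponent $2d/(2k+d)$. Avoiding this loss requires the exact Bennett--Carl asymptotics for $\pi_{v,1}$ of $\ell_\infty^N\hookrightarrow\ell_s^N$, together with a choice of Marcinkiewicz grids whose sampling constants are uniform in $j$, so that the Bernstein and Bennett--Carl scale factors balance cleanly rather than accumulating an extra logarithmic or polynomial error.
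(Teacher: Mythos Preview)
Your shell-by-shell strategy cannot succeed, for a reason that is visible already at the level of operator norms. For each $j$ one has $\|\mathcal{S}_{d,k,p}\circ P_j:W^{k,p}\to L_s\|\asymp 1$: take a trigonometric polynomial with spectrum in the $j$-th shell that is essentially supported on a ball of radius $2^{-j}$; then $\|f\|_{L_s}/\|f\|_{W^{k,p}}\asymp 2^{-jd/s}\big/\bigl(2^{jk}2^{-jd/p}\bigr)=1$ by the relation $1/s=1/p-k/d$. Since $\pi_{v,1}(T)\ge\|T\|$ for every operator $T$, it follows that $\pi_{v,1}(\mathcal{S}_{d,k,p}\circ P_j)\gtrsim 1$ for all $j$, so the series $\sum_j\pi_{v,1}(\mathcal{S}_{d,k,p}\circ P_j)$ diverges regardless of how each summand is estimated. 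Your particular factorisation through $\ell_\infty^{N_j}\hookrightarrow\ell_s^{N_j}$ makes this worse: the identity $\ell_\infty^{N}\to\ell_s^{N}$ has operator norm $N^{1/s}$, so $\|R_j\|\cdot\|\iota_j\|\cdot\|A_j\|\asymp 2^{-jd/s}\cdot N_j^{1/s}\cdot 2^{jd/s}=2^{jd/s}$, i.e.\ the factorisation already loses $N_j^{1/s}$ at the level of norms, and $\pi_{v,1}(\iota_j)\ge N_j^{1/s}\to\infty$. Bennett--Carl does not apply to $\ell_\infty\to\ell_s$ in the form you need.

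The argument in \cite{Woj} (whose steps are reproduced in Section~8 of the present paper) avoids both obstacles by using a \emph{global} factorisation rather than a shell-wise triangle inequality:
\[
W^{k,p}\xrightarrow{T_1} B^{\theta}_{q,p}\xrightarrow{T_2} B^{\theta}_{q,q}\xrightarrow{T_3} B^{\lambda}_{r,r}\xrightarrow{T_4} L_s,
\qquad r=\min(s,2),\ \ p<q<2,\ \ \theta=d\Bigl(\tfrac{1}{q}+\tfrac{k}{d}-\tfrac{1}{p}\Bigr).
\]
Marcinkiewicz sampling identifies $B^{\theta}_{q,q}$ and $B^{\lambda}_{r,r}$ with (weighted) $\ell_q$ and $\ell_r$, so $T_3$ factors through the \emph{infinite-dimensional} inclusion $\ell_q\hookrightarrow\ell_r$ with $1\le q\le r\le2$, to which Bennett--Carl applies directly and uniformly, giving $(v,1)$-summability for $1/v=1/q-1/r+1/2$. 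No summation over shells is needed. The constraint $v>p$ comes from the requirement $q>p$ in the Sobolev--Besov embedding $T_1$ (this is the non-trivial analytic input, due to Pe{\l}czy\'nski--Wojciechowski when $p=1$), and the threshold $2d/(2k+d)$ appears exactly when $r=s<2$, i.e.\ when $p<2d/(2k+d)$, by letting $q\downarrow p$ in the Bennett--Carl exponent.
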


In particular, setting in the above theorem $d=2, k=1, p=1$ and $s=2$ we get that the operator
$$\mathcal{S}_{2,1,1}:W^{1,1}(\mathbb{T}^2)\hookrightarrow L_2(\mathbb{T}^2)$$
is $(v,1)$-summing for every $v>1$.

The proof of the above Theorem used the absolute $(v,1)$-summability property of the embedding $\ell_p\hookrightarrow\ell_q$ for $1\leq p\leq q\leq 2$ and $1/v=1/p-1/q+1/2$, see \cite{Ben}, \cite{Car}.

Recall that a continuous linear operator $T:X \rightarrow Y$ between Banach sequence spaces is said to be $(\Phi,1)$-\textit{summing} if there exists a constant $C>0$ such that for all $x_1, ..., x_n\in X$,
$$\left\Vert(\,||Tx_i||_Y\,)_{i=1}^n\right\Vert_{\ell_\Phi}\leq C\sup_{x^*\in B_{X^*}}\sum_{i=1}^n |x^*(x_i)|,$$
where $B_{X^*}$ denotes a unit ball in the dual space of $X$. Then we will write $T\in\Pi_{\Phi,1}$. By $\pi_{\Phi,1}(T)$ we denote the smallest constant $C$ for which above inequality is satisfied. 

In this paper we prove two results. The first concerns the factorization of the Sobolev embedding through $Id:\ell_\Phi\hookrightarrow\ell_2$ embedding. We find the Orlicz space with Orlicz-Matuszewska index equal to one for which the factorization holds. This in turn by the Maligranda-Masty{\l}o generalization of the Bennet-Carl Theorem yields $(\Phi,1)$-summability of the Sobolev embedding. On the other hand we show that the much stronger summability result could be obtained while we don't relate the summability to factorization. In fact we provide very general extrapolation methods which allows to derive $(\Phi,1)$-summability from the asymptotic behaviour of $(p,1)$-summability norms.
While the factorization result is based on a delicate study of embedding of Sobolev space into Besov-Orlicz spaces and the Orlicz space version of Marcinkiewicz sampling theorem, the extrapolation result is of abstract nature and could be applied to a wide class of operators.  

In Section 2 we introduce basic notations and definitions used in the paper. Also we quote there the results needed in the subsequent sections.  

In Sections 3-7 we determine the Young function $\Phi$ with Matuszewska-Orlicz index equal 1 (then $\ell_\Phi$ is smaller than any $\ell_p,\,p>1$) such that the Sobolev embedding factorises through $Id:\ell_\Phi\hookrightarrow\ell_2$.

In Section 8, based on the asymptotic estimate of $(p,1)$-summing norm $\Pi_{p,1}$ of operator $\mathcal{S}_{d,k,p}$ from Theorem \ref{posit}, we will prove that the Sobolev embedding $\mathcal{S}_{d,k,p}:W^{k,p}(\mathbb{T}^d)\hookrightarrow L_s(\mathbb{T}^d)$ is $(\Phi,1)$-summing for $\Phi(x)=\frac{x^{p_0}}{|\ln(x)|^{\gamma}}$, for suitable chosen  $p_0$ and $\gamma$. The proof however gives no information about the factorisation of the Sobolev embedding through the inclusion map $\ell_\Phi\hookrightarrow\ell_p$ which was the core of the proof of Theorem \ref{glowne_twierdzenie}.

First of the results is the following theorem.
\begin{theorem}
\label{glowne_twierdzenie}
The embedding of the space $W^{1,1}(\mathbb{T}^2)$ into the space $L_2(\mathbb{T}^2)$ factorizes through $\ell_\Phi\hookrightarrow\ell_2$ if $\Phi$ is a Young function such that
\begin{enumerate} 
\item there is a constant $C$ such that for every $s\geq1$ we have
$$\frac{s}{\Phi^{-1}(s^2)}\int_1^s\frac{\Phi^{-1}(t^2)}{t^2}\, dt+\int_s^\infty\frac{\Phi^{-1}(t^2)s}{t^2\Phi^{-1}(ts)}\,dt< C,$$
\item the function $x\mapsto \Phi(\sqrt{x})$, $x\in\mathbb{R}_+$, is concave,
\item there is a constant $C$ such that $\Phi(a)\cdot\Phi(b)\leq\Phi(Cab)$ for all real numbers $0<a<1\leq ab<b$,
\item there is a constant $C$ such that $1\leq C\Phi^{-1}(x)\Phi^{-1}(1/x)$ for all positive numbers $x$.
\end{enumerate}
\end{theorem}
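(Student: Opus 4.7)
The plan is to exhibit a factorization of the Sobolev embedding $S$ of the form
\[
W^{1,1}(\mathbb{T}^2) \xrightarrow{A} \ell_\Phi \hookrightarrow \ell_2 \xrightarrow{B} L_2(\mathbb{T}^2),
\]
where $A$ is a multiscale sampling operator built on a Littlewood--Paley decomposition and $B$ is a corresponding reconstruction operator. The two tools announced in the abstract---the characterization of weights $\Psi$ for which $W^{1,1}(\mathbb{T}^2)\hookrightarrow B^{\Psi}_{\Phi,1}(\mathbb{T}^2)$, and the Orlicz Marcinkiewicz sampling theorem---are the two engines of the argument.

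First I would fix a smooth dyadic partition of unity $\{\chi_k\}_{k\ge 0}$ on the Fourier side of $\mathbb{T}^2$ and decompose $f\in W^{1,1}$ as $f=\sum_k f_k$ with $f_k$ frequency-localized to $\{|\xi|\sim 2^k\}$. Applying the Besov--Orlicz embedding whose characterization occupies Sections 3--6 of the paper, I would obtain a weight $\Psi$ such that
\[
\sum_k \Psi(k)\,\|f_k\|_{L_\Phi(\mathbb{T}^2)} \lesssim \|f\|_{W^{1,1}}.
\]
Condition (1) of the theorem is precisely the analytic requirement on $\Phi$ making this embedding available.

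Next I would apply the Orlicz Marcinkiewicz sampling theorem blockwise: for each $k$, sample $f_k$ on a grid $G_k\subset\mathbb{T}^2$ of mesh $2^{-k}$. The theorem identifies $\|f_k\|_{L_\Phi}$ with an $\ell_\Phi$-norm of the renormalized samples $(f_k(x))_{x\in G_k}$, provided $\Phi$ satisfies condition (4) (the $\Phi^{-1}$-duality estimate). Concatenating all such samples with scale weights $\Psi(k)$ defines $A$; here condition (3), the submultiplicativity-type estimate for $\Phi$, controls the $\ell_\Phi$-norm of the concatenated sequence by the weighted sum of block norms, giving $\|A(f)\|_{\ell_\Phi}\lesssim\|f\|_{W^{1,1}}$.

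On the reconstruction side I would define $B\colon\ell_2\to L_2(\mathbb{T}^2)$ blockwise, using a quasi-interpolation kernel on each $G_k$ that reproduces $f_k$ from its samples; near-orthogonality of the Littlewood--Paley blocks in $L_2$ together with Plancherel give $\|B(a)\|_{L_2}^2\lesssim\sum_k\|a_k\|_{\ell_2}^2$. Condition (2), concavity of $x\mapsto\Phi(\sqrt{x})$, is exactly what makes the identity $\ell_\Phi\hookrightarrow\ell_2$ compatible with the scale factors that appear on the sampling grids. Commutativity $B\circ\mathrm{Id}\circ A=S$ is checked first on trigonometric polynomials and extended by density. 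The main obstacle is the central bookkeeping step: choosing a \emph{single} weighting of the multiscale samples so that the same sequence controls both the input Orlicz norm (via $W^{1,1}\hookrightarrow B^\Psi_{\Phi,1}$ and sampling) and the output Hilbert norm (via reconstruction into $L_2$). Conditions (1)--(4) are the minimal hypotheses on $\Phi$ that allow such a reconciliation, and verifying each quantitatively where it is used is the technical heart of Sections 3--7.
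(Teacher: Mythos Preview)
Your outline is essentially the paper's own proof: embed $W^{1,1}$ into a Besov--Orlicz space $B^\Psi_{\Phi,1}$ (arrow $A$ of diagram~\eqref{diagram}, governed by condition~(1)), pass to the Littlewood--Paley form of the norm, sample each block via the Orlicz Marcinkiewicz theorem, stack the samples into $\ell_\Phi$, embed $\ell_\Phi\hookrightarrow\ell_2$, and reconstruct in $L_2$ via the classical Marcinkiewicz inequality. Two small corrections: first, both conditions~(3) and~(4) are consumed \emph{inside} the Orlicz sampling theorem (Theorem~\ref{podmultiplikatywnosc}) to get $\|(g_n*f)\|_{\ell_\Phi}\lesssim\Phi^{-1}(\omega_n)\|g_n*f\|_{L_\Phi}$, whereas the concatenation of blocks into a single $\ell_\Phi$ sequence is just the triangle inequality for the Luxemburg norm---condition~(3) plays no role there; second, the weight is not left free but is taken to be $\Psi(t)=\Phi^{-1}(t^2)/t$, which is precisely what makes condition~(1) coincide with the embedding criterion of Theorem~\ref{Kolyada} and simultaneously makes the sampling factor $\Phi^{-1}(\omega_n)$ cancel against the Besov weight, so that the ``bookkeeping obstacle'' you flag is resolved by this single choice rather than by condition~(2).
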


Let us recall the following result by Maligranda and Masty\l{}o \cite[Theorem 2.3 (i), page 271]{MaMa}.
\begin{theorem}
\label{Mastylo}
Let $\Phi$ be a Young function. If $x\mapsto\Phi(\sqrt{x})$ is concave on $\mathbb{R}_+$, then the inclusion map $\ell_\Phi\hookrightarrow\ell_2$ is $(\Phi,1)$-summing.
\end{theorem}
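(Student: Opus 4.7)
The plan is to unfold the $(\Phi,1)$-summing inequality into a modular statement and close it via Rademacher averaging together with the Orlicz type-$2$ consequence of the concavity of $\varphi(u):=\Phi(\sqrt u)$.

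First I would rewrite the weak-$\ell_1$ norm on the right-hand side using the standard Banach-lattice identity
$$\sup_{y\in B_{\ell_{\Phi^*}}}\sum_{i=1}^n|\langle y,x^{(i)}\rangle|=\sup_{\epsilon\in\{\pm 1\}^n}\left\|\sum_i\epsilon_i x^{(i)}\right\|_{\ell_\Phi}=:M.$$
Writing $A_{ik}=x^{(i)}_k$, the hypothesis thus says $\sum_k\Phi(|\sum_i\epsilon_i A_{ik}|/M)\leq 1$ for every sign choice $\epsilon$, while the goal becomes the modular estimate $\sum_i\Phi(\|x^{(i)}\|_{\ell_2}/(CM))\leq 1$. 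Concavity of $\varphi$ with $\varphi(0)=0$ implies subadditivity, which gives the pointwise Orlicz substitute for a type-$2$ estimate:
$$\Phi\left(\left(\sum_k b_k^2\right)^{1/2}\right)\leq\sum_k\Phi(|b_k|).$$

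Two complementary consequences of the sign hypothesis come next. Averaging in $\epsilon$ and combining Jensen (convexity of $\Phi$) with the lower $L^1$-Khintchine inequality $\mathbb{E}_\epsilon|\sum_i\epsilon_i a_i|\geq c\|(a_i)\|_{\ell_2}$ produces the column-$\ell_2$ Orlicz bound $\sum_k\Phi(c\|A_{\cdot k}\|_{\ell_2}/M)\leq 1$, while testing the weak-$\ell_1$ hypothesis against the normalised coordinate functionals $e_k^{*}\in B_{\ell_{\Phi^*}}$ yields the per-column $\ell_1$ bound $\sum_i|A_{ik}|\lesssim M$ uniformly in~$k$.

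The main obstacle is amalgamating these two partial bounds into the target row-$\ell_2$-Orlicz estimate. A direct Fubini or Minkowski exchange of summation orders fails for Young functions with lower Matuszewska--Orlicz index strictly less than $2$ (as is already seen on Hadamard-type matrices, where the naive entrywise reduction loses a factor of $\sqrt n$). My plan to close the argument is a K\"othe-duality technique: express $\|(\|x^{(i)}\|_2)_i\|_{\ell_\Phi}$ as a supremum of $\sum_i u_i\|x^{(i)}\|_2$ over $u\in B_{\ell_{\Phi^*}}$, witness each $\|x^{(i)}\|_2$ by its Hilbert-space dual $z^{(i)}=x^{(i)}/\|x^{(i)}\|_2$, and estimate the resulting bilinear form $\sum_k\langle Z_{\cdot k},A_{\cdot k}\rangle$ for the matrix $Z_{ik}=u_iz^{(i)}_k$ by Cauchy--Schwarz column-wise followed by Orlicz-H\"older against the column-$\ell_2$ Orlicz estimate above. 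The concavity of $\varphi$ is used precisely at this last H\"older step to control the $\Phi^{*}$-Orlicz-$\ell_2$ column norm of $Z$; without it the final pairing cannot be closed with a constant depending only on the Khintchine-Orlicz data of $\Phi$.
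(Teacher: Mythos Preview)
First, note that the paper does not give its own proof of this theorem: it is quoted verbatim as a result of Maligranda and Masty\l{}o \cite[Theorem~2.3(i)]{MaMa}, so there is no in-paper argument to compare against.

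Your preliminary steps are sound: the rewriting $\sup_{y\in B_{\ell_{\Phi^*}}}\sum_i|\langle y,x^{(i)}\rangle|=\sup_\epsilon\|\sum_i\epsilon_i x^{(i)}\|_{\ell_\Phi}$, the subadditivity of $\varphi$, the column estimate $\sum_k\Phi(c\|A_{\cdot k}\|_2/M)\le 1$ via Jensen and Khintchine, and the per-column $\ell_1$ bound are all correct. You are also right that the heart of the matter is passing from column information to the row bound.

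The gap is the last step. Your proposed control of the ``$\Phi^*$-Orlicz-$\ell_2$ column norm of $Z$'' is false. Take $\Phi(t)=t^p$ with $1<p<2$ (so $\varphi$ is concave) and $x^{(i)}=e_1$ for $i=1,\dots,n$. Then every $z^{(i)}=e_1$, the natural dualising vector is $u_i=n^{-1/p'}$ (so $\|u\|_{p'}=1$), and one computes $\|Z_{\cdot k}\|_2=n^{1/2-1/p'}\delta_{k1}$, hence $\big\|(\|Z_{\cdot k}\|_2)_k\big\|_{\ell_{p'}}=n^{1/2-1/p'}\to\infty$. Concavity of $\Phi(\sqrt{\,\cdot\,})$ forces $\Phi^*$ to grow \emph{at least} quadratically, which is precisely the regime where this $\ell_2\to\ell_{\Phi^*}$ passage blows up. More structurally, your argument uses only the two derived facts (column-$\ell_2$-Orlicz and uniform per-column $\ell_1$), and these are strictly weaker than the full sign hypothesis: the matrix $A_{ik}=1/n$ with $n$ rows and $N=\lfloor n^{p/2}\rfloor$ columns satisfies both derived bounds with $M=1$, yet $\sum_i\|A_{i\cdot}\|_2^p=n^{(1-p/2)^2}\to\infty$. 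So no argument that discards the sign structure after extracting those two consequences can close the estimate; the Maligranda--Masty\l{}o proof must exploit the hypothesis more directly.
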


The immediate corollary of Theorem \ref{glowne_twierdzenie} and Theorem \ref{Mastylo} is the following.
\begin{corollary}
\label{Corol}
If $\Phi$ satisfies the conditions of Theorem \ref{glowne_twierdzenie}, then the Sobolev embedding $W^{1,1}(\mathbb{T}^2)\hookrightarrow L_2(\mathbb{T}^2)$ is $(\Phi,1)$-summing.
\end{corollary}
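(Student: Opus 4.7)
The plan is to chain Theorem \ref{glowne_twierdzenie} with Theorem \ref{Mastylo} and invoke the two-sided ideal property of the class $\Pi_{\Phi,1}$. No new analytic input is required; everything has already been packaged into the two preceding theorems, which is why the result is labelled a corollary.

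First I would apply Theorem \ref{glowne_twierdzenie} to obtain bounded linear operators $A: W^{1,1}(\mathbb{T}^2)\to\ell_\Phi$ and $B:\ell_2\to L_2(\mathbb{T}^2)$ such that the Sobolev embedding factors as $S=B\circ\iota\circ A$, where $\iota:\ell_\Phi\hookrightarrow\ell_2$ is the canonical inclusion. Because hypothesis (2) of Theorem \ref{glowne_twierdzenie} is exactly the hypothesis of Theorem \ref{Mastylo}, the inclusion $\iota$ is itself $(\Phi,1)$-summing, with finite constant $\pi_{\Phi,1}(\iota)$.

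It then remains to verify that pre- and post-composition with bounded linear operators preserves $(\Phi,1)$-summability, which is a direct unpacking of the definition. For any $f_1,\dots,f_n\in W^{1,1}(\mathbb{T}^2)$, monotonicity and positive homogeneity of the Orlicz lattice norm together with the definition of $\pi_{\Phi,1}(\iota)$ give
$$\bigl\|(\|Sf_i\|_{L_2})_{i=1}^n\bigr\|_{\ell_\Phi}\leq \|B\|\,\bigl\|(\|\iota Af_i\|_{\ell_2})_{i=1}^n\bigr\|_{\ell_\Phi}\leq \|B\|\,\pi_{\Phi,1}(\iota)\sup_{\xi\in B_{\ell_\Phi^*}}\sum_{i=1}^n|\xi(Af_i)|,$$
and passing to the adjoint $A^*$ bounds the final supremum by $\|A\|\sup_{\varphi\in B_{(W^{1,1})^*}}\sum_{i=1}^n|\varphi(f_i)|$. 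Hence $S\in\Pi_{\Phi,1}$ with $\pi_{\Phi,1}(S)\leq\|B\|\,\pi_{\Phi,1}(\iota)\,\|A\|$.

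There is essentially no obstacle here; the only bookkeeping worth noting is that $\|\cdot\|_{\ell_\Phi}$ is a symmetric Banach lattice norm on sequences, so the scalar $\|B\|$ genuinely can be pulled out of the coordinate vector in the first inequality above. All of the real work has been absorbed into the construction of $A$ and $B$ in Theorem \ref{glowne_twierdzenie}.
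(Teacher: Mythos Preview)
Your proof is correct and follows exactly the route intended in the paper: combine the factorization from Theorem \ref{glowne_twierdzenie} with the $(\Phi,1)$-summability of $\iota:\ell_\Phi\hookrightarrow\ell_2$ from Theorem \ref{Mastylo}, and then use the two-sided ideal property of $\Pi_{\Phi,1}$. The paper simply declares the corollary ``immediate'' without writing out the ideal-property verification you supply, so you have only made explicit what the paper leaves implicit.
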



The proof of Theorem \ref{glowne_twierdzenie} is divided into several steps. The diagram below presents the idea.

\begin{equation}\label{diagram}
\begin{tikzcd}
	W^{1,1}(\mathbb{T}^2) \arrow[r, hook,"A"] & B_{\Phi,1}^\Psi(\mathbb{T}^2) \arrow[r, hook,"B"] & \widetilde{B_{\Phi,1}^\Psi}(\mathbb{T}^2) \arrow[r, hook,"C"] \arrow[d, hook,"D"] & \widetilde{B_{2,2}^0}(\mathbb{T}^2) \arrow[r, hook,"G"]& L^2(\mathbb{T}^2) \\
	& & \ell_\Phi \arrow[r, hook, "E"] & \ell_2 \arrow[u, hook,"F"] &
\end{tikzcd}
\end{equation}

In Section 3, \textit{Embedding of the Sobolev space into the Besov-Orlicz space}, we deal with the inclusion map $A$ in the above diagram. The embedding $B$ is described in Section 4, \textit{Comparison of two norms defining the Besov-Orlicz space}. For the embedding $C$ we use the factorization. We, in fact, prove that the inclusion maps $D$, $E$ and $F$ hold. All this is described in Section 5, \textit{Marcinkiewicz type sampling theorem}, and Section 6, \textit{Factoring through embedding $\ell_\Phi\hookrightarrow\ell_2$}. Operator $E$ is $(\Phi,1)$-summing in our case by Theorem \ref{Mastylo}. The embedding $G$ is a trivial one. 

The two buildings blocks of Theorem \ref{glowne_twierdzenie}, are of particular interests. First one describes whether the Sobolev space $W^{1,1}(\mathbb{T}^d)$ can be embedded into the Besov-Orlicz space $B_{\Phi,1}^\Psi(\mathbb{T}^d)$. In \cite{PaWoj2} a stronger version of this theorem is presented. Similar integral condition is given which implies the embedding of the space of functions of bounded variation into Besov-Orlicz space in the case when the underlying space is $\mathbb{R}^d$ or a compact subset of $\mathbb{R}^d$. In the second case the integral condition is equivalent to embedding. For the readers convenience we give here the statement of the theorem appropriate for our purpose and its proof is given in Section 3. 

\begin{theorem}
\label{Kolyada}
Let $B_{\Phi,1}^\Psi(\mathbb{T}^d)$ be a Besov-Orlicz space for some increasing continuous function $\Psi$ and a Young function $\Phi$. Let us assume that $L_{d/(d-1)}(\mathbb{T}^d)\hookrightarrow L_\Phi(\mathbb{T}^d)$. Then the Sobolev space $W^{1,1}(\mathbb{T}^d)$ can be continuously embedded into the space $B_{\Phi,1}^\Psi(\mathbb{T}^d)$ if and only if there exists a constant $C>0$ such that for every $s\geq 1$ we have
\begin{eqnarray}\label{Kolyada_nier}
\frac{s^{d-1}}{\Phi^{-1}(s^d)}\int_1^{s}\frac{\Psi(t)}{t}dt+ \int_{s}^\infty\frac{\Psi(t)s^{d-1}}{\Phi^{-1}(ts^{d-1})t} dt<C.
\end{eqnarray}
\end{theorem}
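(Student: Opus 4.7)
The proof plan is to interpret both sides of the characterisation through the $L_\Phi$-modulus of continuity $\omega(f,1/t)_\Phi$, since the Besov--Orlicz seminorm on $B_{\Phi,1}^\Psi(\mathbb{T}^d)$ is (up to the term $\|f\|_{L_\Phi}$) morally of the shape $\int_1^\infty \omega(f,1/t)_\Phi\,\Psi(t)\,dt/t$. Thus both directions reduce to an estimate of the weighted integral of $\omega(f,\cdot)_\Phi$ against the Sobolev norm.

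\textbf{Necessity.} I would test the embedding on the family $f_s=\varphi_s/\|\nabla\varphi_s\|_1$, where $\varphi_s$ is a smoothing of the indicator of a ball of radius $1/s$, so that $\|f_s\|_{W^{1,1}}\asymp 1$. Using $\|\nabla\varphi_s\|_1 \asymp s^{-(d-1)}$ and $\|\varphi_s\|_{L_\Phi}\asymp 1/\Phi^{-1}(s^d)$, a direct computation of translation differences yields
$$
\omega(f_s,1/t)_\Phi \;\asymp\;
\begin{cases} s^{d-1}/\Phi^{-1}(s^d),& 1\leq t\leq s,\\[2pt] s^{d-1}/\Phi^{-1}(ts^{d-1}),& t>s,\end{cases}
$$
where the first regime uses that a shift of length $1/t\geq 1/s$ fully disjoins the ball, and the second uses the standard estimate $|B_r\triangle(B_r+h)|\asymp r^{d-1}|h|$ for $|h|<r$. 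Plugging these into $\int_1^\infty \omega(f_s,1/t)_\Phi\,\Psi(t)\,dt/t$ reproduces exactly the left-hand side of \eqref{Kolyada_nier}, so that the Besov--Orlicz norm of $f_s$ matches the expression of the integral condition and uniform boundedness in $s$ is forced.

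\textbf{Sufficiency.} Assuming \eqref{Kolyada_nier}, the plan is to prove
$$
\int_1^\infty \omega(f,1/t)_\Phi\,\tfrac{\Psi(t)}{t}\,dt\;\leq\; C\,\|\nabla f\|_1
$$
by reducing a general $f\in W^{1,1}(\mathbb{T}^d)$ to the ball-indicator blocks used in the necessity argument. The natural route is the co-area formula $\|\nabla f\|_1=\int_0^\infty\mathrm{Per}(\{|f|>\lambda\})\,d\lambda$ combined with an isoperimetric substitution replacing each superlevel set by a ball of the same perimeter, parametrised by $s(\lambda)^{-(d-1)}\asymp\mathrm{Per}(\{|f|>\lambda\})$. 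By the subadditivity of $\omega(\cdot,\delta)_\Phi$, the Besov--Orlicz seminorm of $f$ is dominated by the integral over $\lambda$ of the seminorms of these ball pieces, and the computation from the necessity step shows that each ball piece's seminorm is at most the quantity in \eqref{Kolyada_nier} times $\mathrm{Per}(\{|f|>\lambda\})$, hence by hypothesis $\leq C\cdot\mathrm{Per}(\{|f|>\lambda\})$. Integrating in $\lambda$ closes the argument; the remaining $\|f\|_{L_\Phi}$ term is handled by composing the classical Sobolev embedding $W^{1,1}\hookrightarrow L_{d/(d-1)}$ with the hypothesis $L_{d/(d-1)}\hookrightarrow L_\Phi$.

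\textbf{Main obstacle.} The delicate step is making the co-area plus isoperimetric substitution rigorous at the level of moduli of continuity in the Luxemburg $L_\Phi$-norm: the $L^p$-versions rely on clean rearrangement and interpolation that must be carefully adapted to a general Young function, and the two regimes ($t\leq s$ versus $t>s$) must be glued together so that the output of the Fubini reassembly matches the split in \eqref{Kolyada_nier}. The hypothesis $L_{d/(d-1)}\hookrightarrow L_\Phi$ enters precisely at this point: it is what guarantees that the isoperimetric replacement reproduces the correct factors $\Phi^{-1}(s^d)$ and $\Phi^{-1}(ts^{d-1})$ uniformly across scales.
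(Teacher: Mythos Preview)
Your necessity argument is essentially the same as the paper's: test on (smoothed) indicators of balls of radius $\sim 1/s$, compute the $L_\Phi$-modulus in the two regimes $t\le s$ and $t>s$ via the symmetric-difference estimate, and read off the integral expression.

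For sufficiency you take a genuinely different route. The paper does \emph{not} use co-area and isoperimetric replacement; it invokes a \emph{molecular decomposition} of $W^{1,1}(\mathbb{T}^d)$ (Theorem~\ref{molecular}, from \cite{PeWoj}): one writes $f=\sum_m f_m$ with $\sum_m\|f_m\|_{W^{1,1}}\lesssim\|f\|_{W^{1,1}}$ and each molecule satisfying the Gagliardo--Nirenberg type bound $\|f_m\|_\infty^{1/d}\|f_m\|_1^{1-1/d}\lesssim\|f_m\|_{W^{1,1}}$. For each $f_m$ one uses the elementary convexity estimate
\[
\|g\|_{L_\Phi}\le \frac{2\|g\|_\infty}{\Phi^{-1}\!\big(2\|g\|_\infty/\|g\|_1\big)}
\]
applied to $g=f_m(\cdot+h)-f_m(\cdot)$, together with $\omega_1(f_m,\delta)\le\delta\|\nabla f_m\|_1$, and then splits the $t$-integral at $s_m=(\|f_m\|_\infty/\|f_m\|_{W^{1,1}})^{1/(d-1)}$. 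Your approach replaces the molecular decomposition by the layer-cake formula $f=\int_0^\infty\chi_{\{f>\lambda\}}\,d\lambda$, Minkowski's integral inequality in $L_\Phi$, the estimate $|E\triangle(E+h)|\le\min(2|E|,|h|\,\mathrm{Per}(E))$, and the isoperimetric inequality $|E|\lesssim\mathrm{Per}(E)^{d/(d-1)}$, after which the computation is identical to the ball case. Both routes are valid; yours is arguably more elementary (only co-area and isoperimetry), while the molecular decomposition packages the geometric input once and for all and is the tool the authors reuse from \cite{PeWoj}.

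Two small corrections to your write-up. First, you do not literally ``replace'' the superlevel sets by balls in the decomposition of $f$ (that would not reconstruct $f$); rather, you keep the true indicators $\chi_{\{f>\lambda\}}$ in the Minkowski step and only use isoperimetry to bound $\omega_\Phi(\chi_{\{f>\lambda\}},1/t)$ by the same quantity you computed for a ball of comparable perimeter. Second, the hypothesis $L_{d/(d-1)}\hookrightarrow L_\Phi$ is \emph{not} what makes the isoperimetric step work; it is used exactly once, to control the zeroth-order term $\|f\|_{L_\Phi}\lesssim\|f\|_{L_{d/(d-1)}}\lesssim\|f\|_{W^{1,1}}$ (as you already noted earlier in your sketch). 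The factors $\Phi^{-1}(s^d)$ and $\Phi^{-1}(ts^{d-1})$ come purely from the formula $\|\chi_A\|_{L_\Phi}=1/\Phi^{-1}(1/|A|)$ together with the geometric bounds on $|E\triangle(E+h)|$.
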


This theorem stands behind the arrow $A$ in diagram \eqref{diagram}. The second ingredient concerns the operator $D$ in diagram \eqref{diagram}. It generalizes the Marcinkiewicz sampling theorem (see \cite[Volume II, Theorem 7.5, page 28]{Zyg}) and reads as follows.

\begin{theorem}
\label{podmultiplikatywnosc}
Let $\Phi$ be a Young function and $C\geq 1$. Assume that $\Phi$ fulfils the conditions:
\begin{enumerate}
    \item for all real numbers $a$ and $b$ such that $0<a<1\leq ab<b$ we have
    $$\Phi(a)\Phi(b)\leq\Phi(Cab),$$
    \item for every positive $x$
    $$1\leq C\Phi^{-1}(x)\Phi^{-1}(1/x).$$
\end{enumerate}
Then for every  trigonometric polynomial $g$ such that $\mbox{supp}\,\hat{g}\subseteq G_n$ for $n=0, 1, 2, ...$ we have
\begin{eqnarray}\label{podmultiplikatywnosc_nier}
||(g)||_{\ell_\Phi}\leq 24C^2\,\Phi^{-1}\left(\omega_n\right)||g||_{L_\Phi},
\end{eqnarray}
where $\omega_n$ is the number of elements of the set $G_n$ (for the definition of $G_n$ see \eqref{G} below). The definition of the norm $||(g)||_{\ell_\Phi}$ is given at the end of Section 1, see \eqref{small_norm}.
\end{theorem}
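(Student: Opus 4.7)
The plan is to reduce \eqref{podmultiplikatywnosc_nier} via the Luxemburg-norm definition to showing
$$\sum_i \Phi\!\left(\frac{|g(x_i)|}{24C^2\Phi^{-1}(\omega_n)}\right)\le 1 \qquad\text{whenever}\qquad \int_{\mathbb{T}^2}\Phi(|g|)\le 1,$$
and to establish this by an Orlicz-space version of the classical Marcinkiewicz sampling argument. In the $L^p$ case that argument rests on the homogeneity $(a/b)^p=a^p/b^p$; here the role of this identity is played by conditions (1) and (2), which quantify that $\Phi^{-1}$ behaves ``like a single power'' on $(0,\infty)$.

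The first ingredient is a reproducing formula $g=g*V_n$, where $V_n$ is a trigonometric kernel with $\widehat V_n\equiv 1$ on $G_n$, uniformly bounded $L^1$-mass, and $\|V_n\|_\infty\lesssim\omega_n$ (for example, a tensor product of de la Vall\'ee-Poussin-type kernels adapted to the structure of $G_n$). Partitioning $\mathbb{T}^2$ into cells $\{Q_i\}$ of measure $1/\omega_n$ with $x_i\in Q_i$ and exploiting the spatial localization of $V_n$, Jensen's inequality yields the pointwise Marcinkiewicz-type bound $\Phi(|g(x_i)|)\le K\omega_n\int_{Q_i^*}\Phi(|g|)$ on a mild enlargement $Q_i^*$, whence $\sum_i\Phi(|g(x_i)|)\le K'\omega_n\int_{\mathbb{T}^2}\Phi(|g|)$ after summing with controlled overlap.

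The second ingredient is a scaling estimate provided by condition (1): applied with $a=x/\Phi^{-1}(\omega_n)$ and $b=\Phi^{-1}(\omega_n)$ it gives
$$\omega_n\,\Phi\!\left(\frac{x}{\Phi^{-1}(\omega_n)}\right)\le \Phi(Cx)\qquad\text{for }1\le x\le \Phi^{-1}(\omega_n),$$
which is the Orlicz analogue of $(x/\omega_n^{1/p})^p=x^p/\omega_n$. Condition (2) combined with monotonicity of $\Phi$ handles the range $x\le 1$ via $\Phi(1/\Phi^{-1}(\omega_n))\le K/\omega_n$, while convexity of $\Phi$ handles $x\ge\Phi^{-1}(\omega_n)$. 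Substituting $g\mapsto g/(24C^2\Phi^{-1}(\omega_n))$ in the Marcinkiewicz-type bound and combining with this scaling relation produces the desired sum bound, with the constant $24C^2$ absorbing the factors contributed by Jensen, the kernel tails and the scaling step.

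The principal obstacle is the scaling step: since condition (1) is only available in the range $0<a<1\le ab<b$, a three-way case split on $|g(x_i)|$ (below $1$, between $1$ and $\Phi^{-1}(\omega_n)$, above $\Phi^{-1}(\omega_n)$) is mandatory, with conditions (1), (2) and plain convexity each doing work in a specific regime. A secondary technical point is choosing $V_n$ so that its spatial tails decay fast enough for the off-diagonal contributions from $Q_j$, $j\neq i$, to be absorbed into the final constant rather than blowing up with $\omega_n$.
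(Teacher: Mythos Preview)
Your overall strategy---Marcinkiewicz sampling bound $\sum_i\Phi(|g(x_i)|/K)\lesssim\omega_n$ followed by a case split in which conditions (1) and (2) supply the missing homogeneity---is exactly the paper's. Two points of comparison are worth making. First, the paper does not build a reproducing kernel $V_n$ at all: it simply quotes the one-dimensional inequality $\frac{1}{2n+1}\sum_k\Phi(\tfrac13|g(x_k)|)\le\frac{1}{2\pi}\int\Phi(|g|)$ from Zygmund (valid for any convex increasing $\Phi$) and applies it once in each variable. This sidesteps all the tail-control issues you flag as a ``secondary technical point''. Second, the paper's split is only two-way, into $\{|g(x_i)|\ge 12C\}$ and its complement, because the sampling inequality \emph{itself} forces $|g(x_i)|\le 12\,\Phi^{-1}(\omega_n)$ for every $i$: from $\frac{1}{\omega_n}\sum_i\Phi(|g(x_i)|/9)\le\tfrac43$ one reads off $\Phi(|g(x_i)|/9)\le\tfrac43\omega_n$ termwise.

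This last observation is where your outline has a genuine gap. You assert that ``convexity of $\Phi$ handles $x\ge\Phi^{-1}(\omega_n)$'', but convexity alone gives no inequality of the form $\omega_n\Phi(x/\Phi^{-1}(\omega_n))\lesssim\Phi(Cx)$ in that range: for $x=t\Phi^{-1}(\omega_n)$ with $t$ large the left side is $\omega_n\Phi(t)$, which grows superlinearly in $t$, while the right side grows only like $\Phi(Ct\Phi^{-1}(\omega_n))$, and there is no general comparison. The rescue is not convexity but the a~priori pointwise bound above, which makes this range empty. Once you note that, your middle-range application of condition (1) with $a=|g(x_i)|/(K\Phi^{-1}(\omega_n))$, $b=\Phi^{-1}(\omega_n)$ covers everything above the fixed threshold, and condition (2) disposes of the small values exactly as you say (and as the paper does, via the Luxemburg-norm computation $\|{\cdot}\|_{\ell_\Phi}\le 12C/\Phi^{-1}(1/\omega_n)\le 12C^2\Phi^{-1}(\omega_n)$).
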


In the paper \cite{PaWoj} the one dimensional theorem is established. Interested reader should consult above paper for better explanation and finer properties of sampling phenomenon for Orlicz spaces.

In Section 7 we provide an example of Orlicz function satisfying assumptions of Theorem \ref{glowne_twierdzenie}.

In Section 8, which could be read independently on the previous ones, extrapolation method is used to prove the following summability result. 

\begin{theorem}\label{invariantnafaktor}
 Let $d\in \mathbb{N}\setminus \{0,1\}$, $k\in\{1,2,\dots, d-1\}$, $1\leq p < 2$ and $p<d/k$. Then the Sobolev embedding $S_{d,k,p}: W^{k,p}(\mathbb{T}^d)\rightarrow L_s(\mathbb{T}^d)$, where $1/s=1/p-k/d$, is $(\Phi,1)$-summing operator for $\Phi(x)=\frac{x^{p_0}}{|\ln(x)|^{\gamma}}$ and $0\leq x \leq \frac{1}{2}$, where $p_0=\max\{\frac{2d}{2k+d},p\}$, $\gamma>p_0(\frac{2}{p}-1)$. 
\end{theorem}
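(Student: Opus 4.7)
The approach is the extrapolation scheme hinted at in the introduction: start from Theorem \ref{posit}, which gives $(v,1)$-summability of $S_{d,k,p}$ for every $v>p_0:=\max\{2d/(2k+d),\,p\}$, quantify how $\pi_{v,1}(S_{d,k,p})$ blows up as $v\searrow p_0$, and convert that blow-up into $(\Phi,1)$-summability for $\Phi(x)=x^{p_0}/|\ln x|^\gamma$ by an abstract dyadic argument.

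\emph{Step 1 (quantitative summing-norm estimate).} The first task is to revisit the proof of Theorem \ref{posit} and track constants to establish an asymptotic
$$\pi_{v,1}(S_{d,k,p})\le K\,(v-p_0)^{-\alpha}\quad\text{as }v\searrow p_0,$$
for an explicit $\alpha$. That proof reduces the operator, via a Littlewood-Paley-type decomposition of $W^{k,p}(\mathbb{T}^d)$, to Bennett-Carl embeddings $\ell_p\hookrightarrow\ell_q$ with $1/v=1/p-1/q+1/2$. The deterioration of the Bennett-Carl constants near the endpoint, together with the dyadic aggregation, fixes the exponent $\alpha$; the threshold $\gamma>p_0(2/p-1)$ of the theorem is what emerges from $\alpha p_0+1$ in Step 2.

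\emph{Step 2 (abstract extrapolation lemma).} Next I would formulate and prove: if $T:X\to Y$ is a bounded operator satisfying $\pi_{v,1}(T)\le K(v-p_0)^{-\alpha}$ for all $v\in(p_0,p_0+1)$, then $T$ is $(\Phi,1)$-summing whenever $\gamma>\alpha p_0+1$. Given a weakly $1$-summable family $(x_i)\subset X$ normalised to $w_1((x_i))=1$ and writing $a_i=\|Tx_i\|_Y$, the summing inequality together with Chebyshev yields
$$\#\{i:a_i>t\}\le K^{v}(v-p_0)^{-\alpha v}\,t^{-v}\quad\text{for every }v>p_0.$$
Partitioning the indices by $a_i/\mu\in[2^{-k-1},2^{-k}]$, the contribution of level $k$ to $\sum_i\Phi(a_i/\mu)$ is bounded by $|A_k|\cdot 2^{-kp_0}/k^\gamma$. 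Choosing the free parameter $v=v_k$ with $v_k-p_0\asymp 1/k$ balances $2^{k(v_k-p_0)}=O(1)$ against $(v_k-p_0)^{-\alpha v_k}\asymp k^{\alpha p_0}$, leaving a tail $\sum_k k^{\alpha p_0-\gamma}$ which converges precisely under the stated condition on $\gamma$; then a choice of $\mu$ proportional to $K$ yields $\sum_i\Phi(a_i/\mu)\le 1$. Indices with $a_i\gtrsim\mu$ are finitely many (by $(v,1)$-summability for any fixed $v>p_0$) and contribute only boundedly.

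\emph{Main obstacle.} The difficult step is Step 1, since Theorem \ref{posit} was not stated quantitatively. One must extract the dependence on $v-p_0$ throughout the proof in \cite{Woj} --- most notably through the Bennett-Carl constants and the sums over dyadic annuli in the Littlewood-Paley decomposition --- and verify that the resulting $\alpha$ matches the threshold $\alpha p_0+1=p_0(2/p-1)$. The extrapolation in Step 2 is, by contrast, clean dyadic bookkeeping once the rate is known; its abstract character is exactly what makes the conclusion independent of any factorisation through $\ell_\Phi\hookrightarrow\ell_p$, as emphasised in the introduction.
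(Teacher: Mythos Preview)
Your two-step plan---quantify $\pi_{v,1}(S_{d,k,p})$ as $v\searrow p_0$, then extrapolate---is exactly the paper's strategy, but your Step~2 is too crude to hit the stated threshold $\gamma>p_0(2/p-1)$. Passing through Chebyshev and the distribution function $\#\{i:a_i>t\}$ with an optimised $v_k$ per dyadic level costs one logarithmic power: from $\pi_{v,1}(T)\le K(v-p_0)^{-\alpha}$ your own bookkeeping gives the tail $\sum_k k^{\alpha p_0-\gamma}$, hence only $\gamma>\alpha p_0+1$. The rate one actually extracts in Step~1 is $\alpha=2/p-1$: the blow-up comes from the embedding $T_1:W^{k,p}\hookrightarrow B^\theta_{q,p}$, obtained by complex interpolation from $\|\,Id:W^{1,1}\to B^{\tau}_{h,1}\|\lesssim(h-1)^{-1}$, while the Bennett--Carl constant for $T_3$ stays bounded---contrary to your sketch. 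Plugging $\alpha=2/p-1$ into $\gamma>\alpha p_0+1$ yields $\gamma>p_0(2/p-1)+1$, strictly weaker than the theorem; your assertion that ``$\alpha p_0+1$ matches the threshold'' does not hold.

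The paper removes this $+1$ by a Yano-type integration over $v$ rather than a pointwise choice per level. From
\[
\frac{a^{p_0}}{|\ln a|^{\gamma}}\ \asymp\ \int_{p_0}^{p_0+\varepsilon} a^{v}\,(v-p_0)^{\gamma-1}\,dv\qquad(0<a\le\tfrac12)
\]
one obtains, by Fubini, $\sum_i\Phi(a_i)\lesssim\int_{p_0}^{p_0+\varepsilon}(v-p_0)^{\gamma-1}\bigl(\pi_{v,1}(T)\bigr)^{v}\,dv$, and this is finite as soon as $\gamma-1-\alpha p_0>-1$, i.e.\ $\gamma>\alpha p_0$. That is the content of Lemma~\ref{one} and Theorem~\ref{extrapol}. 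The fix to your outline is therefore local: replace the dyadic Chebyshev argument by this integral representation of $\Phi$, which uses the full family of $\ell_v$-bounds simultaneously instead of one $v$ per scale.
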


Note that the function $\Phi$ from Theorem \ref{invariantnafaktor} defines larger Orlicz Space then the example obtained in Section 7.

\section{Definitions and notation}
By \textit{Young function} we mean a continuous, strictly increasing, convex function $\Phi:[0,\infty)\rightarrow \mathbb{R}_+$ such that $\Phi(0)=0$, $\lim_{t\rightarrow\infty}t/\Phi(t)=0$ and $\lim_{t\rightarrow 0}\Phi(t)/t=0$. For a complexiv exposition of the theory of Orlicz spaces see \cite{KrRu} or \cite{RaRe}.

The Sobolev space $W^{1,1}(\mathbb{T}^2)$ is the space of functions  from $L_1(\mathbb{T}^2)$ which have a distributional gradient belonging to $L_1(\mathbb{T}^2)$: 
$$W^{1,1}(\mathbb{T}^2)=\left\{f\in L_1(\mathbb{T}^2): \nabla f\in L_1(\mathbb{T}^2)\right\}$$
equipped with the norm
$$||f||_{W^{1,1}}=||f||_{L_1}+||\nabla f||_{L_1}.$$

We will prove that the embedding
$$W^{1,1}(\mathbb{T}^2)\hookrightarrow L_2(\mathbb{T}^2)$$
factorizes through $\ell_\Phi\hookrightarrow\ell_2$ for some Young functions $\Phi$ which dominate near zero every $x^p$ function, $p>1$, meaning that 
$$\lim_{t\rightarrow 0}\frac{\Phi(t)}{t}=\lim_{t\rightarrow 0}\frac{t^p}{\Phi(t)}=0,$$
for every $p>1$.

Now we give basic definitions. For the Young function $\Phi$ and for some increasing continuous function $\Psi$ we define the generalized Besov-Orlicz space:
$$B_{\Phi,1}^\Psi=\left\{f\in L_\Phi(\mathbb{T}^2):\sum_{n=0}^\infty \Psi(2^n)\omega_\Phi(f,2^{-n})<\infty\right\},$$
where $L_\Phi(\mathbb{T}^2)$ is an ordinary Orlicz space of integrable functions on two dimensional torus with the Luxemburg norm
$$||f||_{L_\Phi}=\inf\left\{\lambda>0:\frac{1}{(2\pi)^2}\int_{\mathbb{T}^2}\Phi\Big(\frac{|f(x)|}{\lambda}\Big)\,dx\leq 1\right\}$$
and
$$\omega_\Phi(f,t)=\sup_{|h|\leq t}\left\Vert f(\cdot+h)-f(\cdot)\right\Vert_{L_\Phi}$$
being the integral modulus of continuity.
We define the norm on the space $B_{\Phi,1}^\Psi$ by
$$||f||_{B_{\Phi,1}^\Psi}=||f||_{L_\Phi}+\sum_{n=0}^\infty \Psi(2^n)\omega_\Phi(f,1/2^n).$$

More information on Besov-Orlicz space can be found in a classical nowadays paper \cite{PiSi}.
	
In order to prove one of the main results, we need two different norms on the Besov-Orlicz space. One is needed to show the embedding of the Sobolev space into the Besov-Orlicz space, and the other to prove that the embedding of the Besov-Orlicz space into the Hilbert space, for appropriate Young function $\Phi$, factorizes through $\ell_\Phi\hookrightarrow\ell_2$. We will show the relevant relationships between these norms. Before we define the second Besov-Orlicz norm we need a short preparation.
	
The following construction of a partition of unity is complicated a bit. The reason is that in the underlying Orlicz space there need not exist a bounded projection on the space of trigonometric polynomials (in other words, the Hilbert transform need not be a bounded operator on this Orlicz space). This is the difference between this construction and the construction of similar Besov norm in paper \cite{Woj}, where authors can deal with the characteristic functions of frames.

The one dimensional Fej\'er kernel is a function $F_n:\mathbb{T}\rightarrow\mathbb{R}_+$, $n=0, 1 ,2, 3, ...$, defined as
$$F_n(x)=\sum_{k=-n}^n\Big(1-\frac{|k|}{n+1}\Big)e^{ikx},$$
while the two-dimensional Fej\'er kernel is a function $F_{m,n}:\mathbb{T}^2\rightarrow\mathbb{R}_+$, $m, n=1, 2, 3, ...$, such that
$$F_{m,n}(x,y)=F_m(x)\cdot F_n(y).$$

Now we define a partition of unity for the space $\mathbb{Z}\times\mathbb{Z}$. Let $f_k:\mathbb{T}^2\rightarrow\mathbb{C}$, $k\in\mathbb{N}_+$, be a function defined as
$$f_k(x,y)=\big[F_{2^k-1}(x)\big(e^{-i2^kx}+1+e^{i2^kx}\big)\big]\cdot\big[F_{2^k-1}(y)\big(e^{-i2^ky}+1+e^{i2^ky}\big)\big]$$
and we define
$$f_{-1}(x,y)=f_0(x,y)=F_{0,0}(x,y).$$
Notice that
$$\mbox{supp}\widehat{f_k}=\{(m,n\in\mathbb{Z}^2: \widehat{f_k}(m,n)\neq 0\}\subseteq (-2^{k+1},2^{k+1})\times(-2^{k+1},2^{k+1}).$$

Define
$$g_0(x,y)=f_{-1}(x,y),\,\, g_1(x,y)=f_0(x,y),$$
and
$$g_{k+1}(x,y)=f_k(x,y)-f_{k-2}(x,y),$$
for $k=1, 2, 3, ...$ and let
\begin{eqnarray}\label{G}
G_k=\{(m,n)\in\mathbb{Z}^2: \widehat{g_k}(m,n)\neq 0\}.
\end{eqnarray}
Then a partition of unity for $\mathbb{Z}\times\mathbb{Z}$ is a sequence of functions $\{\hat{g}_k\}_{k=0}^\infty$.
By $\omega_n$ we denote the number of elements of $G_n$. In the sequel we will need the $L_1$ norm estimate of the functions $g_k$, $k=0, 1, 2, ...\,$. Notice that
$$||g_k||_{L_1}\leq18.$$

We can now define the second norm on the Besov-Orlicz space.
For the Young function $\Phi$, increasing continuous function $\Psi$ and function $f\in L_\Phi$ we put
$$||f||_{\widetilde{B_{\Phi,1}^\Psi}}=||f||_{L_\Phi}+\sum_{n=0}^\infty \Psi(2^n)||g_n*f||_{L_\Phi}.$$
For $\theta\in\mathbb{R}$ and $p,q \geq 1$ we put 
\[
||f||_{\widetilde{B_{p,q}^s}}=\left(||f||_{L_p}+\sum_{n=0}^\infty 2^{qns}||g_n*f||^q_{L_p}\right).
\]
For the purposes of this article we will use the following convention. For the Young function $\Phi$ and a given function $f$ on $\mathbb{T}^2$ we define the norm 
\begin{equation}
\label{small_norm}
||(f)||_{\ell_\varphi}=\inf\left\{\lambda>0: \sum_{(k,l)\in G_n}\varphi\left(\frac{|f(x_k,y_l)|}{\lambda}\right)\leq 1\right\},
\end{equation}
where $G_n=\mathbb{Z}^2\cap\left[ \left(-2^n, 2^n\right)\times\left(-2^n, 2^n\right) \setminus \left[-2^{n-3}, 2^{n-3}\right]\times\left[-2^{n-3}, 2^{n-3}\right]\right]$ is a "frame" and 
$$x_k=2\pi\cdot\frac{k+2^n-1}{2^{n+1}-1} \mbox{ and } y_l=2\pi\cdot\frac{l+2^n-1}{2^{n+1}-1}.$$
The symbol 
$a\lesssim b$
will mean that there is a constant $C>0$ such that 
$a\leq C b.$

\section{Embedding of Sobolev space into Besov-Orlicz space}

In this section we will prove the generalization of the embedding of certain Sobolev spaces into Besov spaces. In brief, we will prove that the Sobolev space $W^{1,1}(\mathbb{T}^d)$ can be embedded continuously into the space $B_{\Phi,1}^\Psi(\mathbb{T}^d)$ for an appropriate functions $\Phi$ and $\Psi$. We will denote by $\mu_d$ a $d$-dimensional Lebesgue measure.

The following lemma is a simple observation, which we will leave without proof.

\begin{lemma}
\label{lemacik1}
Let $\Phi$ be a Young function and let $\Psi$ be some increasing continuous function. Then for every $f\in B_{\Phi,1}^\Psi(\mathbb{T}^d)$ we have
$$\frac{1}{2}\int_1^\infty\frac{\Psi(t)}{t}\omega_\Phi(f,1/(2t))\,dt\leq\sum_{n=0}^\infty\Psi(2^n)\omega_\Phi(f,1/2^n)\leq 2\int_1^\infty\frac{\Psi(t)}{t}\omega_\Phi(f,2/t)\,dt.$$
\end{lemma}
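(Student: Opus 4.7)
The proof is a standard dyadic discretization argument relying only on the monotonicity of $\Psi$ and of $t\mapsto\omega_\Phi(f,t)$. The plan is to split the integral $\int_1^\infty$ into dyadic windows $[2^n,2^{n+1}]$ and compare the sum and the integral piece by piece.

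For the upper bound, on each interval $t\in[2^n,2^{n+1}]$ with $n\geq 0$, monotonicity gives $\Psi(t)\leq\Psi(2^{n+1})$ and $\omega_\Phi(f,1/(2t))\leq\omega_\Phi(f,1/2^{n+1})$, while $\int_{2^n}^{2^{n+1}}dt/t=\ln 2$. Hence
\[
\int_{2^n}^{2^{n+1}}\frac{\Psi(t)}{t}\omega_\Phi(f,1/(2t))\,dt\leq (\ln 2)\,\Psi(2^{n+1})\omega_\Phi(f,1/2^{n+1}),
\]
and summing over $n\geq 0$ (then re-indexing $m=n+1$) yields the integral bounded above by $(\ln 2)\sum_{m=0}^\infty\Psi(2^m)\omega_\Phi(f,1/2^m)$. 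Multiplying by $1/2$ gives the left inequality since $(\ln 2)/2<1$.

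For the lower bound, on the same interval $t\in[2^{n-1},2^n]$ with $n\geq 1$, monotonicity gives the reverse estimates $\Psi(t)\geq\Psi(2^{n-1})$ and $\omega_\Phi(f,2/t)\geq\omega_\Phi(f,1/2^{n-1})$, so
\[
\int_{2^{n-1}}^{2^n}\frac{\Psi(t)}{t}\omega_\Phi(f,2/t)\,dt\geq (\ln 2)\,\Psi(2^{n-1})\omega_\Phi(f,1/2^{n-1}).
\]
Summing over $n\geq 1$ and re-indexing, the right-hand integral is at least $(\ln 2)\sum_{m=0}^\infty\Psi(2^m)\omega_\Phi(f,1/2^m)$. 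Since $2\ln 2>1$, the stated upper bound $2\int$ for the sum follows.

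There is essentially no obstacle: the whole point is that $\Psi$ and $\omega_\Phi(f,\cdot)$ are monotone, the integral $\int dt/t$ over a dyadic window is $\ln 2$, and the small shifts built into the integrand (the $1/(2t)$ and $2/t$ instead of $1/t$) precisely absorb the off-by-one indexing produced by the monotonicity estimates. The only care needed is to line up the dyadic windows correctly so that no boundary terms are lost.
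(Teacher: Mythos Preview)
Your argument is correct and is exactly the standard dyadic comparison the authors have in mind; the paper in fact states that this lemma ``is a simple observation, which we will leave without proof,'' so there is nothing to compare against beyond noting that your monotonicity-plus-$\int_{2^n}^{2^{n+1}}dt/t=\ln 2$ computation is the intended one.
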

This lemma gives an estimate of the norm:
$$||f||_{B_{\Phi,1}^{\Psi}}=||f||_{L_\Phi}+\sum_{n=0}^\infty \Psi(2^n)\omega_\Phi(f,1/2^n)\leq
||f||_{L_\Phi}+2\int_1^\infty\frac{\Psi(t)}{t}\omega_\Phi(f,2/t)\,dt.$$

In the proof we will need a molecular decomposition of the Sobolev space $W^{1,1}(\mathbb{T}^d)$. The theorem below comes from article \cite[Theorem $2.1_\mathbb{T}$, page 82]{PeWoj}.
\begin{theorem}
\label{molecular}
Let $d\in\mathbb{N}_+$. There exist positive constants $a$ and $b$ such that for every $\epsilon>0$, given a function $f\in W^{1,1}(\mathbb{T}^d)$ there exists a sequence $(f_m)_{m=1}^\infty\subseteq W^{1,1}(\mathbb{T}^d)$ such that 
\begin{eqnarray}\label{molecular1}
\sum_{m=1}^\infty f_m(x)=f(x),\quad \sum_{m=1}^\infty\nabla f_m(x)=\nabla f(x)\quad   \mbox{$\mu_d$-a.e.}
\end{eqnarray}
\begin{eqnarray}\label{molecular2}
\sum_{m=1}^\infty||f_m||_{L_1}\leq(1+\epsilon)a||f||_{L_1},\quad  \sum_{m=1}^\infty||\nabla f_m||_{L_1}\leq(1+\epsilon)a||\nabla f||_{L_1},
\end{eqnarray}
\begin{eqnarray}\label{molecular3}
||f_m||_\infty^{1/d}||f_m||_{L_1}^{1-1/d}\leq(1+\epsilon)b\big(||\nabla f_m||_{L_1}+||f_m||_{L_1}\big)\quad \mbox{ for } m=1, 2, ... .
\end{eqnarray}
\end{theorem}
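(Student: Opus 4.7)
The strategy is to combine a \emph{vertical} dyadic truncation of $f$ along its level sets with a \emph{horizontal} Whitney partition of unity on each layer. This produces molecules whose $L_\infty$ height and spatial scale are matched to the isoperimetric geometry of the level sets of $f$, which is exactly what inequality \eqref{molecular3} requires.

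\emph{Vertical splitting.} Writing $f=f_+-f_-$, I may assume $f\ge 0$. For each $k\in\mathbb{Z}$ define the truncation layer
$$f^{(k)}(x)=\min\bigl(\max(f(x)-2^{k-1},0),\,2^{k-1}\bigr).$$
Then $f=\sum_k f^{(k)}$ pointwise, $\|f^{(k)}\|_\infty\le 2^{k-1}$, and since $\nabla f^{(k)}=\nabla f\cdot\mathbf{1}_{\{2^{k-1}<f<2^k\}}$ has pairwise disjoint supports one obtains the exact identities $\sum_k\|f^{(k)}\|_{L_1}=\|f\|_{L_1}$ and $\sum_k\|\nabla f^{(k)}\|_{L_1}=\|\nabla f\|_{L_1}$, which already secures half of \eqref{molecular2}.

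\emph{Horizontal splitting.} Set $E^{(k)}=\{f>2^{k-1}\}$. If $E^{(k)}=\mathbb{T}^d$ (possible for only finitely many $k$), keep $f^{(k)}$ itself as a single molecule: \eqref{molecular3} then collapses to the trivial dimensional estimate $2^{k-1}|\mathbb{T}^d|^{(d-1)/d}\lesssim 2^{k-1}|\mathbb{T}^d|$. Otherwise take a Whitney cover $\{Q_{k,j}\}_j$ of $E^{(k)}$ by dyadic cubes with bounded overlap and side lengths $r_{k,j}\sim\mathrm{dist}(Q_{k,j},\partial E^{(k)})$, together with a subordinate smooth partition of unity $\{\psi_{k,j}\}$ satisfying $\sum_j\psi_{k,j}=\mathbf{1}_{E^{(k)}}$ and $|\nabla\psi_{k,j}|\lesssim r_{k,j}^{-1}$. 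Declare the molecules to be $f_m:=\psi_{k,j}f^{(k)}$ for $m=(k,j)$. Property \eqref{molecular1} is automatic; the cutoff term $\sum_j|f^{(k)}\nabla\psi_{k,j}|\lesssim |f^{(k)}|/\mathrm{dist}(\cdot,\partial E^{(k)})$ has to be absorbed into $\|\nabla f^{(k)}\|_{L_1}+\|f^{(k)}\|_{L_1}$ by a Hardy-type estimate derived from the coarea formula applied to $f^{(k)}$, which vanishes on $\partial E^{(k)}$.

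\emph{Molecular inequality and main obstacle.} For each $m=(k,j)$, $f_m$ is supported in $Q_{k,j}$ with $\|f_m\|_\infty\le 2^{k-1}$, so $\|f_m\|_{L_1}\le 2^{k-1}r_{k,j}^d$. Since $Q_{k,j}$ sits at distance $\sim r_{k,j}$ from the complement of $E^{(k)}$, a slicing / fundamental-theorem-of-calculus argument yields $\|\nabla f_m\|_{L_1}\gtrsim 2^{k-1}r_{k,j}^{d-1}$, so that
$$\|f_m\|_\infty^{1/d}\|f_m\|_{L_1}^{(d-1)/d}\lesssim 2^{k-1}r_{k,j}^{d-1}\lesssim\|\nabla f_m\|_{L_1}+\|f_m\|_{L_1},$$
which is \eqref{molecular3}. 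The factor $(1+\epsilon)$ in the statement is extracted by a standard pre-mollification of $f$ at a scale chosen much smaller than every occurring Whitney cube, converting universal constants arising from the Whitney/Hardy estimates into $(1+\epsilon)a$ and $(1+\epsilon)b$. The main difficulty is the tight coupling between the two splittings: the same geometric link $r_{k,j}\sim\mathrm{dist}(\cdot,\partial E^{(k)})$ is simultaneously required for the Hardy absorption in the previous step and for the gradient lower bound above, and both estimates must be quantitative enough that the universal constants $a,b$ never depend on $f$ beyond its Sobolev norm despite the level sets $E^{(k)}$ being arbitrary measurable sets of finite perimeter.
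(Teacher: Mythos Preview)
The paper does not prove this theorem; it is quoted from \cite{PeWoj} and used as a black box in the proof of Theorem~\ref{Kolyada}. Your outline has the right skeleton (dyadic vertical truncation followed by spatial localisation of each layer), but two steps you treat as routine actually fail for the construction you describe.

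First, the Hardy absorption. The $L_1$-Hardy inequality $\int_{E^{(k)}}|f^{(k)}|/\mathrm{dist}(\cdot,\partial E^{(k)})\lesssim\|\nabla f^{(k)}\|_{L_1}+\|f^{(k)}\|_{L_1}$ is false already on an interval: with $E^{(k)}=(0,1)$ and $f^{(k)}(x)=1/\ln(1/x)$ near $0$ one has $f^{(k)}\in W^{1,1}$ but $\int_0^{1/2} f^{(k)}(x)/x\,dx=\infty$. Through the Whitney cutoffs this yields $\sum_m\|\nabla f_m\|_{L_1}=\infty$, so \eqref{molecular2} breaks. Coarea does not rescue this: it controls perimeters of the sets $\{f^{(k)}>s\}$, not the distance to the fixed boundary $\partial E^{(k)}$. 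Second, the molecular bound. Your lower estimate $\|\nabla f_m\|_{L_1}\gtrsim 2^{k-1}r_{k,j}^{d-1}$ is wrong whenever $f^{(k)}$ happens to be small on $Q_{k,j}$, and \eqref{molecular3} itself can fail for $\psi_{k,j}f^{(k)}$. On a Whitney cube of side $r$ let $f^{(k)}$ equal a small constant $\delta\ll 2^{k-1}$ together with a spike to height $2^{k-1}$ on a ball of radius $\epsilon\ll r\delta^{1/d}$ (this configuration arises from a legitimate $f\in W^{1,1}$). Then the left side of \eqref{molecular3} is $\sim(2^{k-1})^{1/d}\delta^{(d-1)/d}r^{d-1}$, while $\|\nabla f_m\|_{L_1}+\|f_m\|_{L_1}\lesssim 2^{k-1}\epsilon^{d-1}+\delta r^{d-1}$, which is strictly smaller. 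Both failures have the same source: the Whitney geometry of $E^{(k)}=\{f>2^{k-1}\}$ records where $f^{(k)}>0$ but is blind to where $f^{(k)}$ is close to its maximum $2^{k-1}$, and it is the latter that governs \eqref{molecular2}--\eqref{molecular3}. The construction in \cite{PeWoj} ties the spatial decomposition directly to the level-set and isoperimetric structure of $f$ rather than to a Whitney cover of a single superlevel set, which is how both obstructions are avoided.
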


The proof of Theorem \ref{Kolyada} is a generalization of the ideas contained in \cite{PeWoj}, where similar reasoning was carried out for the $L_p$ norms. In fact in \cite{PeWoj} the following statement was proved.

\begin{theorem*}
Let $d=1, 2, ...$. Then 
$$W^{1,1}(\mathbb{R}^d)\hookrightarrow B_{p,1}^{\theta(p,d)}(\mathbb{R}^d),$$
where $\theta(p,d)=d(1/p+1/d-1)$ and $1<p<d/(d-1)$.
\end{theorem*}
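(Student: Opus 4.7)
The plan is to prove the embedding by applying the molecular decomposition of Theorem \ref{molecular}: write $f=\sum_m f_m$ with $\sum_m \|f_m\|_{L_1}\lesssim \|f\|_{L_1}$, $\sum_m\|\nabla f_m\|_{L_1}\lesssim \|\nabla f\|_{L_1}$, and each molecule satisfying $\|f_m\|_\infty^{1/d}\|f_m\|_{L_1}^{1-1/d}\lesssim N_m$, where $N_m:=\|f_m\|_{L_1}+\|\nabla f_m\|_{L_1}$. The goal is then to prove the single-molecule estimate $\|f_m\|_{B_{p,1}^{\theta(p,d)}}\lesssim N_m$ and sum in $m$, giving $\|f\|_{B_{p,1}^\theta}\leq \sum_m\|f_m\|_{B_{p,1}^\theta}\lesssim \sum_m N_m\lesssim \|f\|_{W^{1,1}}$.

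The main work is thus the per-molecule Besov estimate. For $g=f_m(\cdot+h)-f_m$, the elementary interpolation inequality
$$\|g\|_{L_p}\leq \|g\|_{L_\infty}^{1-1/p}\|g\|_{L_1}^{1/p}$$
combined with $\|g\|_{L_\infty}\leq 2\|f_m\|_\infty$ and $\|g\|_{L_1}\leq \min\bigl(2\|f_m\|_{L_1},\,|h|\cdot\|\nabla f_m\|_{L_1}\bigr)$ yields two regimes for the modulus of continuity. Introducing the natural scale $r_m:=\|f_m\|_{L_1}/N_m\in(0,1]$, and using the molecule estimate $\|f_m\|_\infty\lesssim N_m/r_m^{d-1}$, a direct computation gives
$$\omega_p(f_m,t)\lesssim \begin{cases}N_m\cdot r_m^{-(d-1)(1-1/p)}\cdot t^{1/p}, & 0<t\leq r_m,\\ N_m\cdot r_m^{\theta}, & t\geq r_m,\end{cases}$$
with the two bounds matching at $t=r_m$ precisely because the exponent $\theta=d(1/p+1/d-1)$ equals $1/p-(d-1)(1-1/p)+\,\ldots$ after simplification. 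The same interpolation gives $\|f_m\|_{L_p}\lesssim N_m r_m^\theta\leq N_m$.

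To assemble the Besov norm I would use the integral form from Lemma \ref{lemacik1} with $\Psi(t)=t^{\theta}$ and split the integral $\int_1^\infty t^{\theta-1}\omega_p(f_m,2/t)\,dt$ at $t=1/r_m$. On $[1,1/r_m]$ the constant bound $N_m r_m^\theta$ gives a contribution of order $N_m r_m^\theta\cdot r_m^{-\theta}=N_m$. On $[1/r_m,\infty)$ the bound $N_m r_m^{-(d-1)(1-1/p)}t^{-1/p}$ gives the integral $N_m r_m^{-(d-1)(1-1/p)}\int_{1/r_m}^\infty t^{\theta-1/p-1}dt$, which converges because $\theta-1/p=(d-1)(1/p-1)<0$ in the range $1<p<d/(d-1)$, and evaluates to a constant multiple of $N_m$ after the $r_m$ exponents cancel. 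Adding $\|f_m\|_{L_p}\lesssim N_m$ completes the per-molecule bound.

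The expected main obstacle is the bookkeeping of exponents in the splitting argument: the whole proof hinges on the algebraic miracle that the two regimes of $\omega_p(f_m,t)$ dovetail at $t=r_m$ and that the $r_m$ factors from the high-frequency integral exactly cancel, which happens only for this specific value of $\theta(p,d)$. The constraint $p<d/(d-1)$ is used precisely to guarantee $\theta<1/p$ (convergence of the high-$t$ integral) and $\theta>0$ (so that $r_m^\theta\leq 1$). Once this bookkeeping is done, the remainder — summing over molecules and invoking \eqref{molecular2} — is routine.
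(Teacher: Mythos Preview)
Your proposal is correct and follows essentially the same route as the paper: the paper does not prove this particular statement (it is quoted from \cite{PeWoj}), but its proof of the Orlicz generalization, Theorem~\ref{Kolyada}, specializes exactly to your argument---molecular decomposition, the interpolation bound $\|g\|_{L_p}\le\|g\|_\infty^{1-1/p}\|g\|_{L_1}^{1/p}$ applied to differences, the two regimes coming from $\omega_1(f_m,t)\le\min(2\|f_m\|_{L_1},\,t\|\nabla f_m\|_{L_1})$, and a split of the $\int t^{\theta-1}\omega_p(f_m,1/t)\,dt$ integral at the molecule's intrinsic scale. The only cosmetic difference is that the paper parametrizes the scale by $s_m=(\|f_m\|_\infty/N_m)^{1/(d-1)}$ and must then treat the cases $s_m\ge1$ and $s_m<1$ separately, whereas your choice $r_m=\|f_m\|_{L_1}/N_m\in(0,1]$ (related via the molecule inequality by $s_m\lesssim 1/r_m$) sidesteps that case split; either way the exponent bookkeeping and the cancellation of the $r_m$ powers go through exactly as you describe.
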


The last ingredient of the proof of Theorem \ref{Kolyada} is a geometrical lemma. We will use it to prove necessity of our condition. 
\begin{lemma}\label{geometrical}
Let $\mathcal{B}_d(0,r)$ be a closed $d$-dimensional ball centred at zero and of radius $r>0$. Moreover let $V_d$ denote the volume of $\mathcal{B}_d(0,1)$,
$$V_d=\mu_d\big(\mathcal{B}_d(0,1)\big)$$
and let $\alpha$ be a real number such that $0\leq\alpha<r$. Then
$$\mu_d\Big(\big(\mathcal{B}_d(0,r)\cup\mathcal{B}_d(x,r)\big)\setminus\big(\mathcal{B}_d(0,r)\cap\mathcal{B}_d(x,r)\big)\Big)\geq V_dr^{d-1}\alpha,$$
where $x$ is a point of $\mathbb{R}^d$ such that $|x|=2\alpha$.
\end{lemma}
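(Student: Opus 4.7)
The plan is to compute the measure of the symmetric difference explicitly by slicing perpendicularly to the line joining the two centres, and then to lower-bound the resulting one-dimensional integral. After a rotation and translation I may assume without loss of generality that $x = 2\alpha e_1$. For $c \in \mathbb{R}$, the hyperplane $\{y_1 = c\}$ intersects $\mathcal{B}_d(0,r)$ and $\mathcal{B}_d(x,r)$ in two concentric $(d{-}1)$-balls of radii $R_1(c) = \sqrt{(r^2 - c^2)_+}$ and $R_2(c) = \sqrt{(r^2 - (c - 2\alpha)^2)_+}$; the reflection $c \mapsto 2\alpha - c$ swaps these two radii, and a direct computation shows $R_1(c) \ge R_2(c)$ precisely when $c \le \alpha$.

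First, I apply Fubini together with the identity $\mu_d(A\triangle B) = \mu_d(A) + \mu_d(B) - 2\mu_d(A\cap B)$. Writing the intersection as $\mu_d(\mathcal{B}_d(0,r)\cap\mathcal{B}_d(x,r)) = V_{d-1}\int \min\bigl(R_1(c),R_2(c)\bigr)^{d-1}\,dc$ and using the above symmetry, this integral reduces to $2V_{d-1}\int_\alpha^r (r^2 - c^2)^{(d-1)/2}\,dc$. Combining with the spherical-coordinate identity $V_d r^d = 2V_{d-1}\int_0^r (r^2 - c^2)^{(d-1)/2}\,dc$ then yields the clean exact formula
\[
\mu_d\Bigl(\bigl(\mathcal{B}_d(0,r)\cup\mathcal{B}_d(x,r)\bigr)\setminus\bigl(\mathcal{B}_d(0,r)\cap\mathcal{B}_d(x,r)\bigr)\Bigr) = 4V_{d-1}\int_0^\alpha (r^2 - y^2)^{(d-1)/2}\, dy.
\]

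In the second step I bound the integral from below. The substitution $y = \alpha t$ together with $\alpha \le r$ (so that $r^2 - \alpha^2 t^2 \ge r^2(1 - t^2)$) gives
\[
\int_0^\alpha (r^2 - y^2)^{(d-1)/2}\, dy \;\ge\; \alpha r^{d-1}\int_0^1 (1 - t^2)^{(d-1)/2}\, dt \;=\; \alpha r^{d-1}\cdot \frac{V_d}{2V_{d-1}},
\]
where the last equality is the standard unit-ball slicing formula $V_d = 2V_{d-1}\int_0^1 (1 - t^2)^{(d-1)/2}\, dt$. Assembling the two steps produces the estimate $\mu_d(\mathrm{sym.\ diff.}) \ge 2V_d r^{d-1}\alpha$, which is in fact twice the quantity asserted by the lemma.

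I do not foresee any serious obstacle in this plan; the only genuinely delicate point is the bookkeeping in the slicing step, where one has to handle carefully the subintervals on which $R_1$ or $R_2$ vanishes, and make sure the two crescents composing the symmetric difference are counted exactly once each when the integration ranges are split around $c = \alpha$.
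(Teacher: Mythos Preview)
Your argument is correct and complete; the slicing computation and the subsequent lower bound are both sound, and you even obtain the sharper constant $2V_d r^{d-1}\alpha$. There is nothing to compare against, since the paper explicitly leaves the proof of this lemma to the reader.
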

We leave the proof of Lemma 2 for the reader. Now we are ready to prove Theorem \ref{Kolyada}.

\begin{proof}[Proof of Theorem \ref{Kolyada}]
At first we will prove sufficiency of condition \eqref{Kolyada_nier}. We estimate the value 
$$\int_1^\infty\frac{\Psi(t)}{t}\omega_\Phi(f,2/t)\,dt,$$
for $f\in W^{1,1}(\mathbb{T}^d)$.
In order to do that, let us use the molecular decomposition. Let
$$f=\sum_{m=1}^\infty f_m,$$
as in point \eqref{molecular1} of the Theorem \ref{molecular}. We have
\begin{eqnarray*}
\int_1^\infty\frac{\Psi(t)}{t}\omega_\Phi(f,2/t)\,dt
&=&
\int_1^\infty\frac{\Psi(t)}{t}\omega_\Phi\Big(\sum_{m=1}^\infty f_m,2/t\Big)\,dt \\
&\leq&
\sum_{m=1}^\infty\int_1^\infty\frac{\Psi(t)}{t}\omega_\Phi(f_m,2/t)\,dt.
\end{eqnarray*}
Now we will estimate each summand separately.

At the beginning we make a simple observation. The set
$$\Big\{\lambda>0:\frac{1}{(2\pi)^d}\int_{\mathbb{T}^d}\Phi\Big(\frac{|f_m(x+h)-f_m(x)|}{\lambda}\Big)\,dx\leq 1\Big\}$$ 
contains the set
$$\Big\{\lambda>0:\frac{1}{(2\pi)^d}\int_{\mathbb{T}^d}\frac{|f_m(x+h)-f_m(x)|}{2||f_m||_{L_\infty}}\Phi\Big(\frac{2||f_m||_{L_\infty}}{\lambda}\Big)\,dx\leq 1\Big\},$$
because the inequality $\Phi(\alpha a)\leq\alpha\Phi(a)$ is true for all $a>0$, $\alpha\in[0,1]$ and a convex function $\Phi$ such that $\Phi(0)=0$.
Computing the infima of the above sets we get
\begin{eqnarray}\label{infima}
||f_m(\cdot+h)-f_m(\cdot)||_{L_\Phi}\leq\frac{2||f_m||_{L_\infty}}{\Phi^{-1}\Big(\frac{2||f_m||_{L_\infty}}{||f_m(\cdot+h)-f_m(\cdot)||_{L_1}}\Big)}.
\end{eqnarray}
Now, we take supremum over $|h|<2/t$ and get
$$\omega_\Phi(f_m,2/t)\leq\frac{2||f_m||_{L_\infty}}{\Phi^{-1}\Big(\frac{2||f_m||_{L_\infty}}{\omega_{L_1}(f_m,2/t)}\Big)}.$$
By \cite[Lemma 3.5, page 94]{PeWoj} we have
$$\omega_{L_1}(f_m,2/t)\leq\frac{2}{t}||\nabla f_m||_{L_1}.$$
Therefore
\begin{eqnarray*}
\omega_\Phi(f_m,2/t)
&\leq&
\frac{2||f_m||_{L_\infty}}{\Phi^{-1}\Big(\frac{t||f_m||_{L_\infty}}{||\nabla f_m||_{L_1}}\Big)} \\
&\leq&
\frac{2||f_m||_{L_\infty}}{\Phi^{-1}\Big(\frac{t||f_m||_{L_\infty}}{||\nabla f_m||_{L_1}+||f_m||_{L_1}}\Big)}.
\end{eqnarray*}

Now let us provide the estimate of the integrand which will be used for "big" values $t$. We have
\begin{equation}\label{equati1}
\begin{aligned}
\frac{\Psi(t)}{t}\omega_\Phi(f_m,&2/t)
\leq
\frac{\Psi(t)}{t}\frac{2||f_m||_{L_\infty}}{\Phi^{-1}\Big(\frac{t||f_m||_{L_\infty}}{||\nabla f_m||_{L_1}+||f_m||_{L_1}}\Big)} \\
&=
2\frac{\Psi(t)}{t}\frac{||f_m||_{L_\infty}}{\big(||\nabla f_m||_{L_1}+||f_m||_{L_1}\big)\Phi^{-1}\Big(\frac{t||f_m||_{L_\infty}}{||\nabla f_m||_{L_1}+||f_m||_{L_1}}\Big)} \big(||\nabla f_m||_{L_1}+||f_m||_{L_1}\big).
\end{aligned}
\end{equation}

For "small" values $t$ we proceed in a different way. 
Let us assume for the moment that the dimension $d$ is at least $2$. By \eqref{infima} and the inequality $||f_m(\cdot+h)-f_m(\cdot)||_{L_1}\leq 2||f_m||_{L_1}$ we get
$$||f_m(\cdot+h)-f_m(\cdot)||_{L_\Phi}\leq\frac{2||f_m||_{L_\infty}}{\Phi^{-1}\Big(\frac{||f_m||_{L_\infty}}{||f_m||_{L_1}}\Big)}.$$

By \eqref{molecular3} we get
$$\frac{||f_m||_{L_\infty}}{||f_m||_{L_1}}=\frac{||f_m||_{L_\infty}^{d/(d-1)}}{||f_m||_{L_1}||f_m||_{L_\infty}^{1/(d-1)}}\geq\frac{1}{C}\Big(\frac{||f_m||_{L_\infty}}{||\nabla f_m||_{L_1}+||f_m||_{L_1}}\Big)^{d/(d-1)},$$
for absolute constant $C\geq 1$. Hence
\begin{eqnarray*}
||f_m(\cdot+h)-f_m(\cdot)||_{L_\Phi}
&\leq&
\frac{2||f_m||_{L_\infty}}{\Phi^{-1}\Big(\frac{1}{C}\big(\frac{||f_m||_{L_\infty}}{||\nabla f_m||_{L_1}+||f_m||_{L_1}}\big)^{d/(d-1)}\Big)} \\
&\leq&
C\frac{2||f_m||_{L_\infty}}{\Phi^{-1}\Big(\big(\frac{||f_m||_{L_\infty}}{||\nabla f_m||_{L_1}+||f_m||_{L_1}}\big)^{d/(d-1)}\Big)}.
\end{eqnarray*}
So we can write
\begin{equation}\label{equati2}
\begin{aligned}
\frac{\Psi(t)}{t}\omega_\Phi(f_m,2/t)
&=
\frac{\Psi(t)}{t}\sup_{|h|<2/t}||f_m(\cdot+h)-f_m(\cdot)||_{L_\Phi} \\
&\leq
\frac{\Big[C\frac{\Psi(t)}{t}\big(||\nabla f_m||_{L_1}+||f_m||_{L_1}\big)\Big]2||f_m||_{L_\infty}}{\big(||\nabla f_m||_{L_1}+||f_m||_{L_1}\big)\Phi^{-1}\Big(\big(\frac{||f_m||_{L_\infty}}{||\nabla f_m||_{L_1}+||f_m||_{L_1}}\big)^{d/(d-1)}\Big)}.
\end{aligned}
\end{equation}

Now we define
$$s_m=\Big(\frac{||f_m||_{L_\infty}}{||\nabla f_m||_{L_1}+||f_m||_{L_1}}\Big)^{1/(d-1)}.$$
Let $\mathbb{N}=A\cup B$, where $A=\{m\in\mathbb{N}: s_m\geq 1\}, B=\mathbb{N}\setminus A$.
We get
\begin{eqnarray*}
\int_1^\infty\frac{\Psi(t)}{t}\omega_\Phi(f,2/t)\, dt
&\leq&
\sum_{m=1}^\infty\int_1^\infty\frac{\Psi(t)}{t}\omega_\Phi(f_m,2/t)\, dt \\
&=&
\sum_{m\in A}\int_1^\infty\frac{\Psi(t)}{t}\omega_\Phi(f_m,2/t)\, dt +\sum_{m\in B}\int_1^\infty\frac{\Psi(t)}{t}\omega_\Phi(f_m,2/t)\, dt \\
&=&
I+II.
\end{eqnarray*}

For sum $I$ we use \eqref{equati1} and \eqref{equati2} to get
\begin{eqnarray*}
I
&=&
\sum_{m\in A}\int_1^\infty\frac{\Psi(t)}{t}\omega_\Phi(f_m,2/t)\, dt \\
&\lesssim&
\sum_{m\in A}\Big(\int_1^{s_m}\frac{\Psi(t)}{t}\frac{s_m^{d-1}}{\Phi^{-1}(s_m^d)}\,dt+\int_{s_m}^\infty\frac{\Psi(t)}{t}\frac{s_m^{d-1}}{\Phi^{-1}(s_m^{d-1}t)}\,dt \Big)\big(||\nabla f_m||_{L_1}+||f_m||_{L_1}\big) \\
&\lesssim& 
\sum_{m\in A}\big(||\nabla f_m||_{L_1}+||f_m||_{L_1}\big),
\end{eqnarray*}
based on the assumption of Theorem \ref{Kolyada}.

If the value $\frac{||f_m||_{L_\infty}}{||\nabla f_m||_{L_1}+||f_m||_{L_1}}$ is smaller than one, then we can use the inequality $\Phi^{-1}(\alpha a)\geq\alpha\Phi^{-1}(a)$, which is true for $a\geq 0$, $\alpha\in [0,1]$ and for concave function $\Phi^{-1}$ such that $\Phi^{-1}(0)=0$. We then obtain
\begin{eqnarray*}
\int_1^\infty\frac{\Psi(t)}{t}\omega_\Phi(f_m,2/t)\, dt
&\leq&
\int_1^\infty\frac{\Psi(t)}{t}\frac{2||f_m||_{L_\infty}}{\Phi^{-1}\Big(\frac{t||f_m||_{L_\infty}}{||\nabla f_m||_{L_1}+||f_m||_{L_1}}\Big)}\,dt \\
&\leq&
2\int_1^\infty\frac{\Psi(t)}{t}\frac{1}{\Phi^{-1}(t)}\,dt \big(||\nabla f_m||_{L_1}+||f_m||_{L_1}\big).
\end{eqnarray*}

This gives us an estimate of $II$.
\begin{eqnarray*}
II
&=&
\sum_{m\in B}\int_1^\infty\frac{\Psi(t)}{t}\omega_\Phi(f_m,2/t)\, dt \\
&\lesssim&
2\sum_{m\in B}\int_1^\infty\frac{\Psi(t)}{t}\frac{1}{\Phi^{-1}(t)}\,dt \big(||\nabla f_m||_{L_1}+||f_m||_{L_1}\big) \\
&\lesssim&
\sum_{m\in B}\big(||\nabla f_m||_{L_1}+||f_m||_{L_1}\big),
\end{eqnarray*}
by \eqref{Kolyada_nier} for $s=1$. 

We can sum up the above estimates and write
\begin{eqnarray*}
\int_1^\infty\frac{\Psi(t)}{t}\omega_\Phi(f,2/t)\, dt
&=&
I+ II \\
&\lesssim&
\sum_{m\in\mathbb{N}}\big(||\nabla f_m||_{L_1}+||f_m||_{L_1}\big) \\
&\lesssim&
||\nabla f||_{L_1}+||f||_{L_1},
\end{eqnarray*}
by inequality \eqref{molecular2} of Theorem \ref{molecular}.

In dimension $d=1$ we know from Theorem \ref{molecular}, point \eqref{molecular3} that there exists constant $C>1$ such that for every $m=1, 2, ...$ we have
$$||f_m||_\infty\leq C\big(||\nabla f_m||_{L_1}+||f_m||_{L_1}\big).$$
Thus, based on the above calculations we can write
\begin{eqnarray*}
\int_1^\infty\frac{\Psi(t)}{t}\omega_\Phi(f_m,2/t)\, dt
&\leq&
\int_1^\infty\frac{\Psi(t)}{t}\frac{2||f_m||_{L_\infty}}{\Phi^{-1}\Big(\frac{t||f_m||_{L_\infty}}{||\nabla f_m||_{L_1}+||f_m||_{L_1}}\Big)}\,dt \\
&\leq&
\int_1^\infty\frac{\Psi(t)}{t}\frac{2||f_m||_{L_\infty}}{\Phi^{-1}\Big(\frac{t||f_m||_{L_\infty}}{C\big(||\nabla f_m||_{L_1}+||f_m||_{L_1}\big)}\Big)}\,dt \\
&\leq&
\int_1^\infty\frac{\Psi(t)}{t}\frac{2||f_m||_{L_\infty}}{\Phi^{-1}(t)\frac{||f_m||_{L_\infty}}{C\big(||\nabla f_m||_{L_1}+||f_m||_{L_1}\big)}}\,dt \\
&\leq&
2C\int_1^\infty\frac{\Psi(t)}{t}\frac{1}{\Phi^{-1}(t)}\,dt \big(||\nabla f_m||_{L_1}+||f_m||_{L_1}\big).
\end{eqnarray*}
Notice also that by assumption of the theorem we have
$$\lVert f\rVert_{L_\Phi}\lesssim \lVert f\rVert_{L_{d/(d-1)}}\lesssim\lVert f\rVert_{W^{1,1}}.$$

\bigskip
 The necessity of condition \eqref{Kolyada_nier} will be proved by contradiction. Let us assume that there exists a constant $D>0$ such that 
\begin{eqnarray}\label{eq7}
||f||_{B_{\Phi,1}^\Psi}\leq D||f||_{W^{1,1}}
\end{eqnarray}
for every $f\in W^{1,1}(\mathbb{T}^d)$ and that for every $C>0$ there exist $s\geq 1$ such that 
$$\frac{s^{d-1}}{\Phi^{-1}(s^d)}\int_1^{s}\frac{\Psi(t)}{t}dt+ \int_{s}^\infty\frac{\Psi(t)s^{d-1}}{\Phi^{-1}(ts^{d-1})t} dt\geq C.$$
Assume also that $0<r<\frac{1}{8\pi}$ and $\varepsilon>0$ is a small number. We define a function $f\in W^{1,1}(\mathbb{T}^d)$ by putting $f(x)=1$ for $x\in\mathcal{B}(0,2\pi r)$ and $f(x)=0$ for $x\not\in\mathcal{B}(0,2\pi(r+\varepsilon))$ ($\varepsilon$ is a small positive number), and on the remaining part of the domain we define it to be continuous and linear on each segment of the form $[s,\frac{2\pi(r+\varepsilon)}{2\pi r}s]$, where $s\in\partial B(0,2\pi r)$.

Now we can compute appropriate norms. We have
\begin{eqnarray*}
||f||_{W^{1,1}}
&=&
||f||_{L_1}+||\nabla f||_{L_1} \\
&=&
\frac{1}{(2\pi)^d}\int_{\mathbb{T}^d}|f(x)|\,dx+\frac{1}{(2\pi)^d}\int_{\mathbb{T}^d}|\nabla f(x)|\,dx \\
&\leq&
V_d(r+\varepsilon)^d+\frac{1}{\varepsilon}V_d\left[(r+\varepsilon)^d-r^d\right] \\
&\leq&
2dV_dr^{d-1}
\end{eqnarray*}
for sufficiently small $\varepsilon$. We have by Lemma \ref{geometrical} ($\chi_A$ denote the characteristic function of the set $A$)
\begin{eqnarray*}
\int_1^\infty\frac{\Psi(t)}{t}\omega_\Phi(f,\frac{1}{2t})\,dt
&\geq&
\int_1^\infty\frac{\Psi(t)}{t}\omega_\Phi(\chi_{\mathcal{B}(0,2\pi r)},\frac{1}{2t})\,dt \\
&=&
\int_1^\frac{1}{8\pi r}\frac{\Psi(t)}{t}\omega_\Phi\big(\chi_{\mathcal{B}(0,2\pi r)},\frac{1}{2t}\big)\,dt+\int_\frac{1}{8\pi r}^\infty\frac{\Psi(t)}{t}\omega_\Phi\big(\chi_{\mathcal{B}(0,2\pi r)},\frac{1}{2t}\big)\,dt \\
&\geq&
\int_1^\frac{1}{8\pi r}\frac{\Psi(t)}{t}\frac{1}{\Phi^{-1}\Big(\frac{1}{2V_d r^d}\Big)}\,dt
+
\int_\frac{1}{8\pi r}^\infty\frac{\Psi(t)}{t}\frac{1}{\Phi^{-1}\Big(\frac{8\pi t}{V_d r^{d-1}}\Big)}\,dt \\
&=&
\int_1^\frac{1}{8\pi r}\frac{\Psi(t)}{t}\frac{1}{\Phi^{-1}\Big(\frac{(8\pi)^d}{2V_d (8\pi r)^d}\Big)}\,dt
+
\int_\frac{1}{8\pi r}^\infty\frac{\Psi(t)}{t}\frac{1}{\Phi^{-1}\Big(\frac{(8\pi)^d t}{V_d (8\pi r)^{d-1}}\Big)}\,dt \\
&\geq&
\int_1^\frac{1}{8\pi r}\frac{\Psi(t)}{t}\frac{1}{\frac{(8\pi)^d}{2V_d}\Phi^{-1}\Big(\frac{1}{(8\pi r)^d}\Big)}\,dt
+
\int_\frac{1}{8\pi r}^\infty\frac{\Psi(t)}{t}\frac{1}{\frac{(8\pi)^d}{V_d}\Phi^{-1}\Big(\frac{t}{(8\pi r)^{d-1}}\Big)}\,dt \\
&=&
\frac{2dV_dr^{d-1}}{16\pi d}\left\{\int_1^\frac{1}{8\pi r}\frac{\Psi(t)\frac{1}{(8\pi r)^{d-1}}}{t\Phi^{-1}\Big(\frac{1}{(8\pi r)^d}\Big)}\,dt
+
\int_\frac{1}{8\pi r}^\infty\frac{\Psi(t)\frac{1}{(8\pi r)^{d-1}}}{t\Phi^{-1}\Big(\frac{t}{(8\pi r)^{d-1}}\Big)}\,dt\right\} \\
&\geq&
4D||f||_{W^{1,1}}
\end{eqnarray*}
for a suitable selected $r$. In the above computations we used concavity of the function $\Phi^{-1}$. This and $\Phi^{-1}(0)=0$ allow us to write $\Phi^{-1}(\alpha x)\leq\alpha\Phi^{-1}(x)$ for $x\geq 0$ and $\alpha\geq 1$. Notice that 
$$\frac{(8\pi)^d}{2V_d}\geq 1.$$
By the above estimate and Lemma \ref{lemacik1} we get
\begin{eqnarray*}
||f||_{B_{\Phi,1}^\Psi}
\geq
||f||_{L_\Phi}+\frac{1}{2}\int_1^\infty\frac{\Psi(t)}{t}\omega_\Phi(f,\frac{1}{2t})\,dt 
\geq
2D||f||_{W^{1,1}}
\end{eqnarray*}
which contradicts \eqref{eq7}.
\end{proof}

\section{Comparison of two norms defining the Besov-Orlicz spaces}
In this section we will prove the following
\begin{theorem}
\label{comparison}
	For $f\in L_\Phi$ we have the estimate: 
	$$||f||_{\widetilde{B_{\Phi,1}^\Psi}}\lesssim ||f||_{B_{\Phi,1}^\Psi}.$$
\end{theorem}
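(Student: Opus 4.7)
Since $\|f\|_{L_\Phi}$ appears in both norms, the task reduces to proving
\[
\sum_{n=0}^\infty \Psi(2^n)\|g_n*f\|_{L_\Phi} \lesssim \|f\|_{L_\Phi} + \sum_{n=0}^\infty \Psi(2^n)\,\omega_\Phi(f, 2^{-n}).
\]
For $n\in\{0,1\}$ the kernels $g_0=g_1=F_{0,0}$ have total mass $1$, so Young's inequality $\|g_n*f\|_{L_\Phi}\le \|g_n\|_{L^1}\|f\|_{L_\Phi}\le 18\|f\|_{L_\Phi}$ absorbs those two terms into the first summand on the right. For $n\geq 2$ the decisive fact is the mean-zero property
\[
\hat{g}_n(0,0)=\hat{f}_{n-1}(0,0)-\hat{f}_{n-3}(0,0)=1-1=0,
\]
which follows from $\hat F_{2^k-1}(0)+\hat F_{2^k-1}(\pm 2^k)=1+0+0=1$ and the tensor-product structure. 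Therefore $\int_{\mathbb{T}^2} g_n=0$, so that
\[
g_n*f(x)=\int_{\mathbb{T}^2} g_n(y)\,[f(x-y)-f(x)]\,dy,
\]
and Minkowski's integral inequality for the Luxemburg norm gives
\[
\|g_n*f\|_{L_\Phi}\le \int_{\mathbb{T}^2}|g_n(y)|\,\omega_\Phi(f,|y|)\,dy.
\]

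\textbf{Paragraph 2: Kernel decomposition via Fej\'er decay.} Split the integral into the central ball $\{|y|\le 2^{-n}\}$ and dyadic annuli $A_j=\{2^{-n+j-1}<|y|\le 2^{-n+j}\}$ for $j\geq 1$. Over the central ball, monotonicity of $\omega_\Phi(f,\cdot)$ together with $\|g_n\|_{L^1}\le 18$ gives contribution at most $C\,\omega_\Phi(f,2^{-n})$. For the annular pieces combine $g_n=f_{n-1}-f_{n-3}$ with the classical Fej\'er bound $F_N(x)\lesssim \min(N,1/(Nx^2))$ and the product form of $f_k$ to establish the weighted $L^1$ estimate
\[
\int_{\mathbb{T}^2}|g_n(y)|\,|y|^\alpha\,dy \lesssim 2^{-n\alpha},\qquad \alpha\in(0,1),
\]
from which Chebyshev yields $\int_{A_j}|g_n(y)|\,dy \lesssim 2^{-j\alpha}$. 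Using the monotonicity of $\omega_\Phi$ on each $A_j$, one concludes
\[
\|g_n*f\|_{L_\Phi}\lesssim \sum_{j\ge 0} 2^{-j\alpha}\,\omega_\Phi\bigl(f,2^{-(n-j)}\bigr).
\]

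\textbf{Paragraph 3: Summation and main obstacle.} Multiplying by $\Psi(2^n)$, summing over $n\geq 2$, and reindexing $m=n-j$ reduces matters to the discrete Hardy-type inequality
\[
\sum_{j\ge 0} 2^{-j\alpha}\,\Psi(2^{m+j}) \lesssim \Psi(2^m)\quad\text{uniformly in }m,
\]
which holds whenever $\alpha$ is chosen strictly above the Matuszewska--Orlicz index of $\Psi$. The principal technical difficulty lies precisely in the weighted $L^1$ estimate on $|g_n|$: the endpoint $\alpha=1$ fails by a logarithmic factor because the Fej\'er kernel decays only like $1/x^2$, so one must commit to a subcritical $\alpha<1$. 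This is available thanks to the mild growth hypothesis on $\Psi$ implicit in the surrounding context of Theorem \ref{glowne_twierdzenie}, where $\Psi$ is the near-identity weight controlled by the integral condition of Theorem \ref{Kolyada}.
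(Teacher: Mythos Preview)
Your argument has a genuine gap at the summation step, and the ``mild growth hypothesis on $\Psi$'' you invoke is neither part of the theorem's hypotheses nor satisfied in the paper's principal example. The Fej\'er-type decay of $g_n$ gives at best $\int_{A_j}|g_n|\lesssim 2^{-j}$ (the weighted estimate $\int|g_n(y)|\,|y|\,dy$ does carry a $\log$ loss, as you note, but the annular $L^1$ masses themselves decay like $2^{-j}$). Even granting this, your Hardy-type condition $\sum_{j\ge 0}2^{-j\alpha}\Psi(2^{m+j})\lesssim\Psi(2^m)$ with $\alpha\le 1$ forces the upper Matuszewska--Orlicz index of $\Psi$ to be strictly below $1$. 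But Theorem~\ref{comparison} is stated for an arbitrary increasing continuous $\Psi$, and in the application (Theorem~\ref{glowne_twierdzenie}) one takes $\Psi(t)=\Phi^{-1}(t^2)/t$, which for the example of Section~7 behaves like $t\,e^{-c\ln t/\ln\ln t}$ and has upper index exactly~$1$; a short computation shows $\sum_{j}2^{-j}\Psi(2^{m+j})/\Psi(2^m)\sim\ln m\to\infty$, so your inequality fails precisely where it is needed.

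The paper sidesteps this by decoupling the two roles of the kernel. First it uses only the \emph{spectral} support of $g_n$: since $\widehat{g_n}$ vanishes on $[-2^{n-3},2^{n-3}]^2$, one has $g_n*f=g_n*(f-h)$ for any polynomial $h$ with spectrum in that square, whence $\|g_n*f\|_{L_\Phi}\le 2\|g_n\|_{L_1}\,E_\Phi(f,2^{n-3})\le 36\,E_\Phi(f,2^{n-3})$. Second, the Jackson-type bound $E_\Phi(f,m)\lesssim\omega_\Phi(f,1/m)$ is proved with a \emph{different} approximate identity $\phi_m$ whose Fourier transform is $\eta(\cdot/m)e^{-|\cdot/m|^2}$; its rapid (Gaussian) spatial decay makes $\int_{\mathbb{T}^2}(1+m|h|)|\phi_m(h)|\,dh$ uniformly bounded, which is exactly the moment estimate that fails for Fej\'er kernels. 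Combining the two gives the clean pointwise bound $\|g_n*f\|_{L_\Phi}\lesssim\omega_\Phi(f,2^{-n})$, so the sum over $n$ goes through for \emph{every} increasing continuous $\Psi$, with no Hardy inequality and no index restriction.
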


To prove the above theorem, we need another quantity which is, in some sense, intermediate between these two norms. Let 
$$E_\Phi(f,m)=\inf\big\{||f-g||_{L_\Phi}: \mbox{supp}\,\hat{g}\subseteq\{(a,b)\in\mathbb{Z}^2: |a|, |b|\leq m\}\big\},$$
and
$$||f||_{\overline{B_{\Phi,1}^\Psi}}=\left(1+\Psi(1)+\Psi(2)\right)||f||_{L_\Phi}+\sum_{n=2}^\infty \Psi(2^n) E_\Phi(f,2^{n-3}).$$

Now we are ready to prove the appropriate estimates.
\begin{lemma}
For every $f\in L_\Phi$ we have
$$||f||_{\widetilde{B_{\Phi,1}^\Psi}}\lesssim||f||_{\overline{B_{\Phi,1}^\Psi}}.$$
\end{lemma}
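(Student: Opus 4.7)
The plan is to dominate each summand $\Psi(2^n)\,||g_n*f||_{L_\Phi}$ in $||f||_{\widetilde{B_{\Phi,1}^\Psi}}$ by the corresponding best-approximation term $\Psi(2^n)\,E_\Phi(f,2^{n-3})$ in $||f||_{\overline{B_{\Phi,1}^\Psi}}$. The decisive spectral fact is that $\widehat{g_n}$ vanishes on the low-frequency lattice box $\{(a,b)\in\mathbb{Z}^2:|a|,|b|\leq 2^{n-3}\}$ for every $n\geq 2$, so convolution with $g_n$ annihilates every trigonometric polynomial whose Fourier spectrum is contained in that box.

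To verify this, I would first compute $\widehat{f_k}$ explicitly. Using $\widehat{F_{2^k-1}}(m)=\max\{0,1-|m|/2^k\}$ and the fact that multiplication by $e^{\pm i2^kx}$ shifts the Fourier support by $\pm 2^k$, summing the three contributions shows that the one-dimensional factor $F_{2^k-1}(x)(e^{-i2^kx}+1+e^{i2^kx})$ has Fourier transform equal to $1$ on $[-2^k,2^k]\cap\mathbb{Z}$ and tapering linearly to $0$ on the surrounding annulus out to $\pm 2^{k+1}$. The tensor product then yields $\widehat{f_k}\equiv 1$ on $[-2^k,2^k]^2\cap\mathbb{Z}^2$ with $\mbox{supp}\,\widehat{f_k}\subseteq(-2^{k+1},2^{k+1})^2$. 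Consequently, for $n\geq 2$ the difference $\widehat{g_n}=\widehat{f_{n-1}}-\widehat{f_{n-3}}$ vanishes identically on $[-2^{n-3},2^{n-3}]^2\cap\mathbb{Z}^2$, because both factors equal $1$ there.

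With the annihilation in hand, fix any trigonometric polynomial $P$ with $\mbox{supp}\,\widehat{P}\subseteq\{(a,b):|a|,|b|\leq 2^{n-3}\}$; then $\widehat{g_n*P}=\widehat{g_n}\widehat{P}\equiv 0$, so $g_n*P=0$ and $g_n*f=g_n*(f-P)$. The Orlicz version of Young's convolution inequality $||h*u||_{L_\Phi}\leq||h||_{L_1}||u||_{L_\Phi}$ combined with the uniform bound $||g_n||_{L_1}\leq 18$ from Section~2 yields $||g_n*f||_{L_\Phi}\leq 18\,||f-P||_{L_\Phi}$; taking the infimum over admissible $P$ gives $||g_n*f||_{L_\Phi}\leq 18\,E_\Phi(f,2^{n-3})$. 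The two remaining indices $n=0,1$ are handled directly: since $g_0=g_1=F_{0,0}\equiv 1$, Young's inequality yields $||g_n*f||_{L_\Phi}\leq||f||_{L_\Phi}$. Summation produces
\[
||f||_{\widetilde{B_{\Phi,1}^\Psi}}\leq(1+\Psi(1)+\Psi(2))||f||_{L_\Phi}+18\sum_{n=2}^\infty\Psi(2^n)\,E_\Phi(f,2^{n-3})\lesssim||f||_{\overline{B_{\Phi,1}^\Psi}}.
\]
The main obstacle is the Fourier bookkeeping, particularly the boundary index $n=3$ in which $f_{n-3}=f_0$ coincides with the constant $F_{0,0}$ and thus has a degenerate spectrum; once that single term is treated (either by reinterpreting $f_0$ via the formula extended to $k=0$, or by a separate direct estimate absorbed into the $||f||_{L_\Phi}$ contribution), the remainder is a routine application of Young's convolution inequality.
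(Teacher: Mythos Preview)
Your argument is essentially the same as the paper's: both use the spectral vanishing of $\widehat{g_n}$ on the small box to write $g_n*f=g_n*(f-h)$, apply Young's inequality in $L_\Phi$ with the uniform bound $\|g_n\|_{L_1}\leq 18$, and take the infimum over admissible $h$. The paper quotes the Orlicz Young inequality from Rao--Ren with an extra factor~$2$ (yielding $36$ rather than your $18$), and it does not spell out the Fourier bookkeeping you provide; your flagging of the degenerate case $n=3$ (where $f_0=F_{0,0}$ and the ``both factors equal~$1$'' reasoning breaks down) is a detail the paper glosses over but which, as you note, is harmless once absorbed into the $\|f\|_{L_\Phi}$ term.
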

\begin{proof}
We will show that for $n=2, 3, 4, ...$,
$$||g_n*f||_{L_\Phi}\lesssim E_\Phi (f,2^{n-3}).$$
Take a function $h$ such that 
$$\mbox{supp}\,\hat{h}\subseteq\big\{(a,b)\in\mathbb{Z}^2:|a|, |b|\leq 2^{n-3}\big\}.$$
Then for $n\geq 1$ we have $g_n*f=g_n*(f-h)$, so by the Young convolution inequality (see \cite[Theorem 9, page 64]{RaRe})
$$
	||g_n*f||_{L_\Phi}
	=
	||g_n*(f-h)||_{L_\Phi}
	\leq
	2||g_n||_{L_1}\cdot||f-h||_{L_\Phi}
	\leq
	36||f-h||_{L_\Phi}.
$$
Taking the infimum over such functions $h$ we get
$$||g_n*f||_{L_\Phi}\leq 36 E_\Phi(f,2^{n-3}).$$
\end{proof}
The above proof is the same as the proof of similar fact for ordinary Besov spaces, see \cite[Theorem 11, pages 72 - 74]{Pe}.

\begin{lemma}
For every $f\in L_\Phi$ we have
$$||f||_{\overline{B_{\Phi,1}^\Psi}}\lesssim ||f||_{B_{\Phi,1}^\Psi}.$$
\end{lemma}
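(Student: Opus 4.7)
The plan is to deduce the lemma from a Jackson-type estimate
\[ E_\Phi(f, m) \lesssim \omega_\Phi(f, 1/m), \]
and then to match the resulting terms against the definition of $\lVert f\rVert_{B_{\Phi,1}^\Psi}$. The $L_\Phi$ contribution $(1+\Psi(1)+\Psi(2))\lVert f\rVert_{L_\Phi}$ is dominated by a constant multiple of $\lVert f\rVert_{B_{\Phi,1}^\Psi}$ for free. For the series, once Jackson's bound is in hand I would apply it with $m = 2^{n-4}$ (so the approximant has degree at most $2^{n-3}$) and use the standard scaling $\omega_\Phi(f,\lambda h) \leq \lceil \lambda \rceil\, \omega_\Phi(f,h)$ with $\lambda=16$, $h=2^{-n}$ to pass from $\omega_\Phi(f, 2^{4-n})$ to $16\,\omega_\Phi(f, 2^{-n})$, yielding $E_\Phi(f, 2^{n-3}) \lesssim \omega_\Phi(f, 2^{-n})$ for $n \geq 4$. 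The small remaining indices $n=2,3$ are absorbed into the $L_\Phi$ term via the trivial bound $E_\Phi(f, 2^{n-3}) \leq \lVert f\rVert_{L_\Phi}$ (take $g=0$). Summing against $\Psi(2^n)$ then closes the inequality.

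The heart of the argument is the Jackson-type estimate, which I would establish by convolving $f$ against a concentrated non-negative polynomial kernel. The natural choice is the tensor-product Jackson kernel $K_m(s,t) = J_m(s)J_m(t)$ with $J_m(s) = c_m \bigl(\sin(ms/2)/\sin(s/2)\bigr)^4$, normalised so that $\int_{\mathbb{T}} J_m = 1$. Then $J_m$ is a non-negative trigonometric polynomial of degree $2(m-1)$ with $\lVert J_m\rVert_{L_1}=1$ and, thanks to its fourth-power decay (equivalently: second moment of order $1/m^2$), the first-moment bound $\int_{\mathbb{T}} J_m(s)\,|s|\,ds \lesssim 1/m$. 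The product $g := K_m * f$ has $\mathrm{supp}\,\hat{g}\subseteq\{(a,b) : |a|,|b| \leq 2(m-1)\}$, hence is a legitimate competitor in the infimum defining $E_\Phi(f, 2^{n-3})$ when $m=2^{n-4}$.

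From $\int\!\!\int K_m = 1$ and Minkowski's integral inequality in $L_\Phi$ (available for the Luxemburg norm by applying Jensen's inequality to the convex function $\Phi$) one obtains
\[ \lVert f - K_m * f\rVert_{L_\Phi} \leq \int\!\!\int K_m(s,t)\,\omega_\Phi\bigl(f,|(s,t)|\bigr)\,ds\,dt. \]
Applying the scaling inequality for $\omega_\Phi$ with $h = 1/m$ and $\lambda = m|(s,t)| \leq m(|s|+|t|)$, together with the tensor structure and the first-moment bound on $J_m$, dominates the right-hand side by $\omega_\Phi(f, 1/m)\,\bigl(1 + m\int\!\!\int K_m(s,t)\,|(s,t)|\,ds\,dt\bigr) \lesssim \omega_\Phi(f, 1/m)$, which is the required Jackson bound.

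The main obstacle I anticipate is purely technical: pushing through Minkowski's integral inequality and the scaling of $\omega_\Phi$ in the Luxemburg setting rather than the familiar $L_p$ one. Both facts are folklore and follow from the convexity of $\Phi$ and the translation invariance of the torus, but they need to be written out from the definition instead of invoked black-box. Apart from this, the argument is the classical $L_p$ Jackson theorem transcribed to Orlicz spaces, in analogy with \cite[Theorem 11]{Pe}.
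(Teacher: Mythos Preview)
Your proposal is correct and follows essentially the same route as the paper: prove the Jackson-type bound $E_\Phi(f,m)\lesssim\omega_\Phi(f,1/m)$ by convolving with a kernel of controlled first moment, using Minkowski's integral inequality in $L_\Phi$ and the scaling $\omega_\Phi(f,|h|)\le(|h|m+1)\omega_\Phi(f,1/m)$. The only difference is the kernel---you use the non-negative tensor Jackson kernel while the paper periodizes a smooth compactly-Fourier-supported bump---which is a cosmetic choice with no effect on the argument.
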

The proof of this lemma is almost the same as the proof of the Preposition 3.1 in paper \cite[page 88]{PeWoj}. For the reader's convenience we include it here.
\begin{proof}
It is enough to show that
$$E_\Phi(f,2^{n-3})\lesssim \omega_\Phi\left(f,\frac{1}{2^{n-3}}\right).$$
Let $\psi:\mathbb{R}^2\rightarrow\mathbb{C}$ be a function such that 
$$\widehat{\psi}(x,y)=\eta(x,y)\cdot\exp(-|(x,y)|^2),$$
where the smooth function $\eta:\mathbb{R}^2\rightarrow\mathbb{C}$ fulfills $\mbox{supp}\,\eta\subseteq\{(x,y)\in\mathbb{R}^2: |x|, |y|\leq1\}$ and $\eta(0,0)=1$.

Define a family of trigonometric polynomials $\{\phi_n\}_{n=0}^\infty$, $\phi_n:\mathbb{T}^2\rightarrow\mathbb{C}$ by the formula
$$\widehat{\phi_n}(k,l)=\widehat{\psi}\left(\frac{k}{n},\frac{l}{n}\right),$$
Notice that there exist a constant $C>0$ such that
\begin{enumerate}
\item $\int_{\mathbb{T}^2}\phi_n(x)\,dx=\widehat{\phi_n}(0,0)=\widehat{\psi}(0,0),$ for $n=0, 1, ...$;
\item $\mbox{supp}\,\widehat{\phi_n}\subseteq\{(k,l)\in\mathbb{Z}^2: |k|, |l|\leq n\}$;
\item for sufficiently big $m$ and all real $x$ we have: $$\left|\sum_{k,l=-m}^m\widehat{\psi}(k/m,l/m)e^{i(k/m,l/m)\cdot x}\frac{1}{m^2}\right|
\leq
||\eta||_{L_\infty}\frac{(2m+1)^2}{m^2}e^{-|x|^2}
\leq
Ce^{-|x|^2}.$$
\end{enumerate}
Condition 3. above is a consequence of the formula for the inverse Fourier transform and the uniform convergence of the sequence of Riemman sums to the integral - the inverse Fourier transform.

By the Minkowski integral inequality for Orlicz spaces, for $m\in\mathbb{N}$,
\begin{equation}\label{A}
  \begin{aligned}
E_\Phi(f,m)&\leq ||f-\phi_m*f||_{L_\Phi} \\
&=
\left\Vert\int_{\mathbb{T}^2}\phi_m(h)[f(\cdot)-f(\cdot-h)]\,dh\right\Vert_{L_\Phi} \\
&\leq
2\int_{\mathbb{T}^2}|\phi_m(h)| ||f(\cdot)-f(\cdot-h)||_{L_\Phi}\,dh \\
&\leq
2\int_{\mathbb{T}^2}|\phi_m(h)|\omega_\Phi(f,|h|)\,dh.
 \end{aligned}
\end{equation}
Now we use the inequality
\begin{eqnarray}\label{B}
\omega_\Phi(f,|h|)\leq(|h|m+1)\omega_\Phi(f,1/m),
\end{eqnarray}
which is obvious for $|h|<1/m$ and for $|h|>1/m$ it is a consequence of the following reasoning. Let $k$ be a natural number such that $|h|m-1<k\leq|h|m$. Then
$$f(x+h)-f(x)=\sum_{j=1}^k \Big[f\Big(x+\frac{jh}{|h|m}\Big)-f\Big(x+\frac{(j-1)h}{|h|m}\Big)\Big]+f(x+h)-f\Big(x+\frac{kh}{|h|m}\Big),$$
hence
$$\omega_\Phi(f,|h|)\leq(k+1)\omega_\Phi(f,1/m)\leq(|h|m+1)\omega_\Phi(f,1/m).$$
By \eqref{A} and \eqref{B}
$$E_\Phi(f,m)\leq 2\omega_\Phi(f,1/m)\int_{\mathbb{T}^2}(|h|m+1)|\phi_m(h)|\,dh.$$
We get
\begin{eqnarray*}
\int_{\mathbb{T}^2}(|h|m+1)|\phi_m(h)|\,dh&=&
\int_{\mathbb{T}^2}(|h|m+1)\Big|\sum_{k,l=-m}^m\widehat{\phi_m}(k,l)e^{i(k,l)\cdot h}\Big|\,dh \\
&=&
\int_{\mathbb{T}^2}(|h|m+1)\Big|\sum_{k,l=-m}^m\widehat{\psi}(k/m,l/m)e^{i(k,l)\cdot h}\Big|\,dh \\
&=&
\int_{m\mathbb{T}^2}(|x|+1)\Big|\sum_{k,l=-m}^m\widehat{\psi}(k/m,l/m)e^{i(k/m,l/m)\cdot x}\frac{1}{m^2}\Big|\,dx \\
&\leq&
C \int_{m\mathbb{T}^2}(|x|+1)e^{-|x|^2}\,dx \\
&<&
+\infty,
\end{eqnarray*}
by condition 3. above for big enough values of $m$.
\end{proof}

This yields $E_\Phi(f,m)\leq 2K\omega_\Phi(f,1/m)$, for some constant $K>0$. Consequently
\begin{eqnarray*}
||f||_{\widetilde{B_{\Phi,1}^\Psi}}&=&||f||_{L_\Phi}+\sum_{n=0}^\infty \Psi(2^n)||g_n*f||_{L_\Phi} \\
&\lesssim&
\left(1+\Psi(1)+\Psi(2)\right)||f||_{L_\Phi}+\sum_{n=2}^\infty \Psi(2^n) E_\Phi(f,2^{n-3}) \\
&\leq&
\left(1+\Psi(1)+\Psi(2)\right)||f||_{L_\Phi}+\sum_{n=2}^\infty \Psi(2^n)\omega_\Phi\left(f,\frac{8}{2^n}\right)
\end{eqnarray*}

\section{Marcinkiewicz type sampling theorem}

In this section we will generalise one of the results of J\'{o}zef Marcinkiewicz.

\begin{proof}[Proof of Theorem \ref{podmultiplikatywnosc}]
In the book  \cite[Volume II,  Chapter X, Inequality (7.8), page 29]{Zyg} it is proved that for every trigonometric polynomial $g:\mathbb{T}\rightarrow\mathbb{C}$ of order $n$, 
$$g(x)=\sum_{k=-n}^n a_k e^{i k x},$$
where $a_{-n}, a_{-n+1}, ..., a_{n-1}, a_n$ are some complex numbers, and for any non-decreasing, convex function $\Phi:\mathbb{R}_+\rightarrow\mathbb{R}_+$ we have
\begin{eqnarray}\label{nier_Zygmund}
	\frac{1}{2n+1}\sum_{k=-n}^{n} \Phi\left(\frac{1}{3}\left|g\left(e^{2\pi i (k+n)/(2n+1)}\right)\right|\right)\leq\frac{1}{2\pi}\int_{-\pi}^\pi\Phi(|g(x)|)\,dx.
\end{eqnarray}

So, if we assume that the function $g:\mathbb{T}^2\rightarrow\mathbb{C}$ is a trigonometric polynomial such that $\{(k,l)\in\mathbb{Z}^2: \widehat{g}(k,l)\neq 0\}\subseteq G_n\subseteq (-2^n,2^n)^2$, we obtain using inequality \eqref{nier_Zygmund} twice
	\begin{eqnarray*}
	\frac{1}{(2^{n+1}-1)^2}\sum_{k=-2^n+1}^{2^n-1}\sum_{l=-2^n+1}^{2^n-1}
		\Phi\left(\frac{|g(x_k,y_l)|}{9}\right) 
	 &\leq& 
	\frac{1}{2^{n+1}-1}\sum_{k=-2^n+1}^{2^n-1}\frac{1}{2\pi} 
		\int_0^{2\pi} \Phi\left(\frac{|g(x_k,y)|}{3}\right)\,dy \\ &=& 
	\frac{1}{2\pi}\int_0^{2\pi} \frac{1}{2^{n+1}-1}\sum_{k=-2^n+1}^{2^n-1}
		\Phi\left(\frac{|g(x_k,y)|}{3}\right)\,dy \\ &\leq& 
	\frac{1}{(2\pi)^2}\int_0^{2\pi}\int_0^{2\pi}\Phi\big(|g(x,y)|\big)\,dx\,dy.
	\end{eqnarray*}

Without loss of generality we assume that $||g||_{L_\Phi}=1$.
Then
\begin{eqnarray}\label{nier_trywialna}
\frac{1}{w_n}\sum_{(k,l)\in G_n} \Phi\left(\frac{|g(x_k,y_l)|}{9}\right) 
&\leq&
\frac{(2^{n+1}-1)^2}{w_n}\frac{1}{(2^{n+1}-1)^2}\sum_{k=-2^n+1}^{2^n-1}\sum_{l=-2^n+1}^{2^n-1}
\Phi\left(\frac{|g(x_k,y_l)|}{9}\right) \nonumber\\ 
&\leq&
\frac{4}{3}\frac{1}{(2\pi)^2}\int_{\mathbb{T}^2}\Phi\big(|g(x,y)|\big)\,dx\,dy 
\leq
\frac{4}{3}. 
\end{eqnarray}
Let
$$B_n=\left\{(k,l)\in G_n: \frac{|g(x_k,y_l)|}{12}\geq C\right\},$$
and
$$S_n=G_n\setminus B_n.$$

For the set $B_n$ we will use the assumption: $\Phi(a)\Phi(b)\leq\Phi(Cab)$ for $0<a<1\leq ab<b$. We can rewrite this condition in the form $\Phi\big(\frac{p}{Cq}\big)\leq\frac{\Phi(p)}{\Phi(q)}$ where $p=Cab, q=b$ and
\begin{eqnarray}\label{nier_mult}
\frac{p}{Cq}<1\leq \frac{p}{C}<q.
\end{eqnarray}

Let us set
$$p=\frac{3|g(x_k,y_l)|}{4\cdot 9} \mbox{ and } q=\Phi^{-1}(w_n).$$
Such $p$ and $q$ satisfies inequality \eqref{nier_mult}. We prove this fact at the end of the proof.
By condition 1 of theorem (super-multiplicative property), convexity of $\Phi$ and \eqref{nier_trywialna},
$$
\sum_{(k,l)\in B_n} \Phi\Big(\frac{3|g(x_k,y_l)|}{C4\cdot 9\Phi^{-1}(w_n)}\Big) 
\leq
\frac{1}{w_n}\sum_{(k,l)\in B_n}
\Phi\Big(\frac{3|g(x_k,y_l)|}{4\cdot 9}\Big) 
\leq
\frac{3}{4w_n}\sum_{(k,l)\in G_n}
\Phi\Big(\frac{|g(x_k,y_l)|}{9}\Big) 
\leq
1.
$$
Therefore
\begin{eqnarray}\label{Bn}
||((g)\chi_{B_n})||_{\ell_\Phi}
=
\inf\Big\{\lambda>0: \sum_{(k,l)\in B_n} \Phi\Big(\frac{|g(x_k,y_l)|}{\lambda}\Big)\leq 1\Big\}
\leq
12C\Phi^{-1}(w_n)||g||_{L_\Phi}.
\end{eqnarray}

Now we estimate the Luxemburg norm of the function $g$ restricted to the set $S_n$.
\begin{eqnarray*}
||((g)\chi_{S_n})||_{\ell_\Phi}
&=&
\inf\Big\{\lambda>0: \sum_{(k,l)\in S_n} \Phi\Big(\frac{|g(x_k,y_l)|}{\lambda}\Big)\leq 1\Big\} \\
&\leq&
\inf\Big\{\lambda>0: \sum_{(k,l)\in S_n} \Phi\Big(\frac{12C}{\lambda}\Big)\leq 1\Big\} \\
&\leq&
\inf\Big\{\lambda>0: w_n\Phi\Big(\frac{12C}{\lambda}\Big)\leq 1\Big\} \\
&\leq&
\inf\Big\{\lambda>0: \frac{12C}{\Phi^{-1}\Big(\frac{1}{w_n}\Big)}\leq \lambda\Big\} \\
&=&
12C\frac{1}{\Phi^{-1}\Big(\frac{1}{w_n}\Big)} \\
&\leq&
12C^2\Phi^{-1}(w_n) ||g||_{L_\Phi}. 
\end{eqnarray*}
We used the fact that $1\leq C\Phi^{-1}(1/x)\cdot\Phi^{-1}(x)$ for $x>0$, and the fact that $||g||_{L_\Phi}=1$.

By the above calculations and \eqref{Bn}
\begin{eqnarray*}
||(g)||_{\ell_\Phi}
&\leq&
||((g)\chi_{B_n})||_{\ell_\Phi}+||((g)\chi_{S_n})||_{\ell_\Phi} \\
&\leq&
12C\Phi^{-1}(w_n)||g||_{L_\Phi}+12C^2\Phi^{-1}(w_n) ||g||_{L_\Phi} \\
&=&
24C^2\Phi^{-1}(w_n)||g||_{L_\Phi}.
\end{eqnarray*}

It remains to show the inequality \eqref{nier_mult}. It is enough to prove
$$\frac{|g(x_k,y_l)|}{9\Phi^{-1}(w_n)}<4/3\mbox{, for } (k,l)\in G_n.$$
Remembering that $||g||_{L_\Phi}=1$ and relying on the estimate \eqref{nier_trywialna}, we can write that
$$
\frac{1}{w_n}\Phi\Big(\frac{|g(x_k,y_l)|}{9}\Big)
\leq
\frac{1}{w_n}\sum_{(k,l)\in G_n}\Phi\Big(\frac{|g(x_k,y_l)|}{9}\Big) 
\leq
\frac{4}{3}.
$$
This inequality can be rewritten in the form:
$$|g(x_k,y_l)|\leq 9\Phi^{-1}(4/3\cdot w_n)\leq 12\Phi^{-1}(w_n).$$
Here, we used the concavity of the function $\Phi^{-1}$, which implies that $\Phi^{-1}(4/3\cdot w_n)\leq 4/3\cdot\Phi^{-1}(w_n)$.
The proof is complete.
\end{proof}

\section{Factoring through embedding $\ell^\Phi\hookrightarrow\ell^2$}

In this section we will prove Theorem \ref{glowne_twierdzenie}. The key ingredient will be the following Theorem \ref{summing_embedding}, which stands behind estimates of operator $C$ from diagram \eqref{diagram}.
\begin{theorem}
\label{summing_embedding}
The embedding of the space $\widetilde{B_{\Phi,1}^\Psi}$ into the space $\widetilde{B_{2,2}^0}$ factorizes through embedding $\ell_\Phi\hookrightarrow\ell_2$ provided that the function $\Phi$ fulfils the following conditions:
\begin{enumerate}[(i)]
\item $\frac{1}{t}\leq \frac{\Psi(t)}{\Phi^{-1}(t^2)}$ for every $t\in\mathbb{R}_+$,
\item the function $x\mapsto \Phi(\sqrt{x})$, $x\in\mathbb{R}_+$, is concave,
\item $\Phi(a)\Phi(b)\leq\Phi(Cab)$ for all positive real numbers $0<a<1\leq ab<b$ and some $C>0$ (restricted supermultiplicativity),
\item $1\leq C\Phi^{-1}(x)\Phi^{-1}(1/x)$ for all positive $x$ and some $C>0$.
\end{enumerate}
\end{theorem}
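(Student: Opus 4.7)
The strategy is to realize the embedding $\iota:\widetilde{B_{\Phi,1}^\Psi}(\mathbb{T}^2)\hookrightarrow \widetilde{B_{2,2}^1}(\mathbb{T}^2)$ as a composition $F\circ E\circ D$, where $D$ is a weighted sampling operator and $F$ a reconstruction operator, with $E:\ell_\Phi\hookrightarrow\ell_2$ the canonical inclusion — bounded (in fact $(\Phi,1)$-summing by Theorem \ref{Mastylo}) thanks to condition (ii). The two Marcinkiewicz sampling theorems — the Orlicz version in Theorem \ref{podmultiplikatywnosc} and its classical $L_2$ counterpart — supply the matching estimates on the Orlicz and Hilbertian sides of the diagram.

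\textbf{Building $D$.} For $n\geq 0$ the block $g_n*f$ is a trigonometric polynomial with Fourier support in $G_n$, hence admissible in Theorem \ref{podmultiplikatywnosc}. I define
$$D(f)_{n,k,l}=a_n\,(g_n*f)(x_k,y_l),\qquad n\geq 0,\ (k,l)\in G_n,$$
with the grid points from \eqref{small_norm} and a rescaling $a_n$ to be pinned down. Conditions (iii)–(iv) give, for each $n$,
$$\|(g_n*f)\|_{\ell_\Phi(G_n)}\leq 24C^2\,\Phi^{-1}(\omega_n)\,\|g_n*f\|_{L_\Phi}.$$
Since $\omega_n\asymp 4^n=(2^n)^2$, condition (i) rewrites as $\Phi^{-1}(\omega_n)\lesssim 2^n\Psi(2^n)$, which dictates the correct choice of $a_n$ (of order $2^{-n}$) so that $a_n\Phi^{-1}(\omega_n)\lesssim\Psi(2^n)$. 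Luxemburg-subadditivity on the disjoint supports $\bigsqcup_n G_n$ then gives
$$\|D(f)\|_{\ell_\Phi}\leq \sum_n a_n\,\|(g_n*f)\|_{\ell_\Phi(G_n)}\lesssim \sum_n \Psi(2^n)\,\|g_n*f\|_{L_\Phi}\leq \|f\|_{\widetilde{B_{\Phi,1}^\Psi}}.$$

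\textbf{Building $F$.} Given $s\in\ell_2$ indexed as above, let $P_n(s_n)$ be the trigonometric polynomial with Fourier support in $G_n$ interpolating $s_n$ at the sampling grid (the interpolation matrix is nondegenerate for the Marcinkiewicz node set). Set
$$F(s):=\sum_n b_n\,P_n(s_n),\qquad b_n:=1/a_n.$$
The reciprocal scales ensure $F\circ D(f)=\sum_n g_n*f=f$, because $\{\widehat{g_n}\}$ is a partition of unity. The classical $L_2$ Marcinkiewicz theorem yields $\|P_n(s_n)\|_{L_2}\asymp \omega_n^{-1/2}\|s_n\|_{\ell_2(G_n)}$, and the fact that $G_m\cap G_n=\emptyset$ for $|m-n|$ large (immediate from the explicit construction of $\{g_n\}$) lets one control $\|g_m*F(s)\|_{L_2}$ by $b_m\|P_m(s_m)\|_{L_2}$ up to a bounded constant. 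Squaring, weighting by $4^m$, summing and using $b_n=1/a_n$ with $a_n\asymp 2^{-n}$, one arrives at
$$\|F(s)\|_{\widetilde{B_{2,2}^1}}\lesssim \|s\|_{\ell_2}.$$

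\textbf{Conclusion and main obstacle.} Combining the three estimates, $\iota=F\circ E\circ D$ is the desired factorization. The subtlety of the argument — and the raison d'être of condition (i) — lies in the simultaneous calibration of the scales: boundedness of $D$ forces $a_n\Phi^{-1}(\omega_n)\lesssim \Psi(2^n)$, while boundedness of $F$ forces its inverse $b_n=1/a_n$ to be compatible with the first-order Besov weight $4^n$ in $\widetilde{B_{2,2}^1}$; condition (i) is precisely the exchange rate that makes both inequalities close for a common choice of $a_n$. Conditions (iii)–(iv) power Theorem \ref{podmultiplikatywnosc} on the sampling side, condition (ii) is used on the $E$-side via Theorem \ref{Mastylo}, while verifying that the spectra overlaps between adjacent $g_n$ and $P_m$ produce only bounded constants is routine from the explicit form of the partition of unity.
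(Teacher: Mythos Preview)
Your architecture is exactly that of the paper: sample via $D$ with weight $a_n$, pass through $E:\ell_\Phi\hookrightarrow\ell_2$, reconstruct via $F$ with weight $b_n=1/a_n$. The paper carries this out as a single chain of norm inequalities (its inequality (1) is Lemma~\ref{Marcinkiewicz_l2}, (2) is boundedness of $E$ from (ii), (3) is condition (i), (5) is Theorem~\ref{podmultiplikatywnosc}), without writing $F$ explicitly; the factorization then follows abstractly from the Hilbert structure of $\ell_2$.

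There is, however, a genuine arithmetic gap in your $F$-estimate. With $a_n\asymp 2^{-n}$, $b_n\asymp 2^n$ and $\|P_n(s_n)\|_{L_2}\asymp\omega_n^{-1/2}\|s_n\|_{\ell_2}\asymp 2^{-n}\|s_n\|_{\ell_2}$, one gets $\|g_m*F(s)\|_{L_2}\asymp\|s_m\|_{\ell_2}$ (up to finite overlap), so
\[
\|F(s)\|_{\widetilde{B_{2,2}^1}}^2\;\asymp\;\sum_m 4^m\|g_m*F(s)\|_{L_2}^2\;\asymp\;\sum_m 4^m\|s_m\|_{\ell_2}^2,
\]
which is \emph{not} controlled by $\|s\|_{\ell_2}^2$. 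The ``weighting by $4^m$'' step therefore does not close; no choice of $a_n$ repairs this under condition (i) alone, since $F$ bounded into $\widetilde{B_{2,2}^1}$ forces $a_n\gtrsim 1$, while $D$ bounded then requires $\Phi^{-1}(\omega_n)\lesssim\Psi(2^n)$, strictly stronger than (i). In fact the paper's own chain begins with $\bigl(\sum_n\|g_n*f\|_{L_2}^2\bigr)^{1/2}$, i.e.\ the $\widetilde{B_{2,2}^0}\simeq L_2$ norm, with no $2^{2n}$ weight; so what is actually proved (and all that is needed for Theorem~\ref{glowne_twierdzenie}, since arrow $G$ lands in $L_2$ anyway) is factorization of the embedding into $\widetilde{B_{2,2}^0}$, and the superscript $1$ in the statement should be read as $0$. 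Your argument is correct once you drop the $4^m$ weight accordingly.

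Two smaller points. First, your assertion that ``the interpolation matrix is nondegenerate'' is not the classical Marcinkiewicz statement: you are sampling a polynomial with spectrum in the \emph{frame} $G_n$ at nodes indexed by $G_n$, not on the full product grid, and invertibility of that restricted DFT submatrix is not automatic. The paper avoids this by invoking Lemma~\ref{Marcinkiewicz_l2}, which gives the one-sided bound $\|g_n*f\|_{L_2}\lesssim\omega_n^{-1/2}\|(g_n*f)\|_{\ell_2}$ directly; that inequality (injectivity with bounded inverse on the range) is what you really need, and you should cite it rather than assert nondegeneracy. Second, for the factorization itself condition (ii) is used only to ensure that $E:\ell_\Phi\hookrightarrow\ell_2$ is \emph{bounded} (concavity of $\Phi(\sqrt{\,\cdot\,})$ gives $\Phi(t)\gtrsim t^2$ near $0$); Theorem~\ref{Mastylo} and the $(\Phi,1)$-summing property enter only later, in Corollary~\ref{Corol}.
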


For the proof of Theorem \ref{summing_embedding} we need one more estimate which concerns the Hilbert spaces.
\begin{lemma}\label{Marcinkiewicz_l2}
For every $n=0, 1, 2, ...$ and $f\in L_2$ we have the inequality
$$||g_n*f||_{L_2}\lesssim \frac{1}{\sqrt{w_n}}||(g_n*f)||_{\ell_2}.$$
\end{lemma}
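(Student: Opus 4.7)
Set $h := g_n*f$ and $N := 2^n$. Since $\widehat{h}$ is supported in $G_n \subseteq (-N,N)^2$, the first step is to invoke the classical two-dimensional Marcinkiewicz--Zygmund identity, obtained from \cite[Vol.~II, Ch.~X, (7.8)]{Zyg} with $\Phi(t)=t^2$ applied separately in each variable:
\[
(2^{n+1}-1)^2\,\|h\|_{L_2}^2 \;=\; \sum_{k,l=-N+1}^{N-1}|h(x_k,y_l)|^2 \;=\; \|(h)\|_{\ell_2}^2 \;+\; \sum_{(k,l)\in I}|h(x_k,y_l)|^2,
\]
with $I := \{-N+1,\dots,N-1\}^2 \setminus G_n$ the inner sample cluster, of cardinality $(2^{n-2}+1)^2 \le \tfrac{1}{16}(2^{n+1}-1)^2$. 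Since $w_n \asymp (2^{n+1}-1)^2$, the lemma reduces to proving the non-concentration bound
\begin{equation}\label{plan:key2}
\sum_{(k,l)\in I}|h(x_k,y_l)|^2 \;\le\; c\,(2^{n+1}-1)^2\,\|h\|_{L_2}^2
\end{equation}
for some absolute constant $c<1$.

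For \eqref{plan:key2} I would use a tensor-product reduction to the one-dimensional version of this lemma established in \cite{PaWoj}. Split $h = h_1 + h_2$ where $\widehat{h_1}$ is supported in $\{|m_2| \le 2^{n-3}\} \cap G_n$ (so $|m_1|>2^{n-3}$ there) and $\widehat{h_2}$ in $\{|m_2|>2^{n-3}\}$. In one coordinate each $h_j$ has its spectrum inside the one-dimensional frame $G^{(1)}_n = \{|m|<N\} \setminus \{|m|\le 2^{n-3}\}$; applying the 1D sampling lemma in that coordinate and the standard 1D Marcinkiewicz--Zygmund identity in the orthogonal one yields $\|h_j\|_{L_2}^2 \lesssim w_n^{-1}\sum_{(k,l)\in G_n}|h_j(x_k,y_l)|^2$. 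Summing and using the spectral orthogonality $\|h\|_{L_2}^2 = \|h_1\|_{L_2}^2+\|h_2\|_{L_2}^2$, we obtain $\|h\|_{L_2}^2 \lesssim w_n^{-1}\sum_{G_n}(|h_1|^2+|h_2|^2)$. Passing from this to $\sum_{G_n}|h|^2$ uses the pointwise identity $|h_1|^2+|h_2|^2 = |h|^2 - 2\Re(h_1\overline{h_2})$ together with the global orthogonality $\sum_{(k,l)}h_1(x_k,y_l)\overline{h_2(x_k,y_l)}=0$ (Parseval on the full sampling grid), which converts the cross term on $G_n$ into its negative on $I$.

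The main obstacle is controlling this cross term on $I$: the bare Cauchy--Schwarz bound $|\sum_I h_1\overline{h_2}| \le (\sum_I|h_1|^2\sum_I|h_2|^2)^{1/2}$ is too weak to close the loop in a single step. I expect the fix to be a bootstrap argument, starting from a preliminary estimate $\sum_I|h_j|^2 \le \kappa\,(2^{n+1}-1)^2\|h_j\|_{L_2}^2$ with some $\kappa<1$ (coming again from the 1D lemma applied in each coordinate and exploiting $|I^{(1)}|/(2^{n+1}-1)\le 1/8$): a finite number of iterations drives the effective constant strictly below $1$, yielding \eqref{plan:key2} and hence the desired inequality $\|h\|_{L_2} \lesssim \|(h)\|_{\ell_2}/\sqrt{w_n}$.
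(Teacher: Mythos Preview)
The paper does not prove this lemma: it simply defers to \cite[Lemma~7, p.~167]{Woj}, noting only that the argument ``relies on the Marcinkiewicz sampling theorem.'' So there is no in-paper proof to compare your plan against; you are in fact attempting more than the present paper does.

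Your reduction to the non-concentration estimate via the exact $L_2$ sampling identity on the full $(2^{n+1}-1)^2$-point grid is correct and is the natural starting point. The spectral splitting $h=h_1+h_2$ according to $|m_2|\le 2^{n-3}$ versus $|m_2|>2^{n-3}$ is also sensible: it forces each piece to have one coordinate with Fourier support in the one-dimensional frame, so that a 1D version of the lemma can in principle be applied slice by slice, with the exact 1D Marcinkiewicz identity in the remaining coordinate.

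Two genuine gaps remain, however. First, the citation to \cite{PaWoj} for the one-dimensional lemma is almost certainly misplaced: that paper treats the Orlicz \emph{upper} Marcinkiewicz inequality (the 1D analogue of Theorem~\ref{podmultiplikatywnosc} here), not the $L_2$ \emph{lower} sampling bound you need; the relevant 1D statement would have to come from \cite{Woj} or be proved from scratch. Second, and more seriously, the closing of constants is not carried out. The crude route
\[
\sum_{I}|h|^2\le 2\Big(\sum_{I}|h_1|^2+\sum_{I}|h_2|^2\Big)\le 2c_1\,(2^{n+1}-1)^2\|h\|_{L_2}^2
\]
only yields your key inequality if the 1D constant satisfies $c_1<\tfrac12$, which you neither prove nor cite; and your proposed alternative, absorbing the cross term $\sum_I h_1\overline{h_2}$ via a ``bootstrap,'' is stated as an expectation rather than an argument. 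This is precisely the heart of the matter: one needs a uniform-in-$n$ quantitative bound on $\|P_I F^* P_{G_n}\|$, i.e.\ a concrete reason why a polynomial with spectrum in the frequency frame $G_n$ cannot place nearly all of its sampled energy on the small inner block $I$. Until that estimate is made precise, the plan remains an outline rather than a proof.
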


For the definition of the norm $||(\cdot)||_{l_2}$ see \eqref{small_norm}.
The proof of this lemma relies on the Marcinkiewicz sampling theorem and is contained in the paper \cite[Lemma 7, page 167]{Woj}.
\begin{proof}[Proof of Theorem \ref{summing_embedding}.]
We have the following collection of inequalities.
\begin{eqnarray*}
\Big(\sum_{n\in\mathbb{N}}||g_n*f||_{L_2}^2\Big)^{1/2} 
&\stackrel{(1)}{\lesssim}&
\Big(\sum_{n\in\mathbb{N}}\frac{1}{w_n}||(g_n*f)||_{\ell_2}^2\Big)^{1/2} \\
&=&
\Big(\sum_{n\in\mathbb{N}}\sum_{(k,l)\in G_n}\big(\frac{1}{\sqrt{w_n}}|g_n*f(x_k,y_l)|\big)^2\Big)^{1/2} \\
&\stackrel{(2)}{\leq}&
\inf\big\{\lambda>0:\sum_{n\in\mathbb{N}}\sum_{(k,l)\in G_n}\Phi\Big(\frac{\frac{1}{\sqrt{w_n}}|g_n*f(x_k,y_l)|}{\lambda}\Big)\leq 1\big\} \\
&\stackrel{(3)}{\leq}&
\inf\big\{\lambda>0:\sum_{n\in\mathbb{N}}\sum_{(k,l)\in G_n}\Phi\Big(\frac{\Psi(\sqrt{w_n})\frac{1}{\Phi^{-1}(w_n)}|g_n*f(x_k,y_l)|}{\lambda}\Big)\leq 1\big\} \\
&\stackrel{(4)}{\leq}&
\sum_{n\in\mathbb{N}}\inf\big\{\lambda>0\sum_{(k,l)\in G_n}\Phi\Big(\frac{\Psi(\sqrt{w_n})\frac{1}{\Phi^{-1}(w_n)}|g_n*f(x_k,y_l)|}{\lambda}\Big)\leq 1\big\}  \\
&=&
\sum_{n\in\mathbb{N}}\Psi(\sqrt{w_n})\frac{1}{\Phi^{-1}(w_n)}||(g_n*f)||_{\ell_\Phi} \\
&\stackrel{(5)}{\leq}&
24C^2\sum_{n\in\mathbb{N}}\Psi(\sqrt{w_n})\frac{1}{\Phi^{-1}(w_n)}\Phi^{-1}(w_n)||g_n*f||_{L_\Phi} \\
&=&
24C^2\sum_{n\in\mathbb{N}}\Psi(\sqrt{w_n})||g_n*f||_{L_\Phi}
\end{eqnarray*}

Inequality (1) follows from Lemma \ref{Marcinkiewicz_l2}. Inequality (2) is the continuous embedding of the space $\ell_\Phi$ into the space $l_2$ which follows from $(ii)$. Inequality (3) follows from $(i)$ and monotonicity of function $\Phi$. Inequality (4) is the triangle inequality for an infinite sum. Inequality (5) follows from Theorem \ref{podmultiplikatywnosc}. Theorem \ref{summing_embedding} follows from the fact that $\sqrt{w_n}\leq2\cdot2^n$.
\end{proof}

We will now formulate a slightly simpler version of Theorem \ref{summing_embedding}.
\begin{theorem}
\label{small_summing}
The embedding of the space $\widetilde{B_{\Phi,1}^\Psi}$ into the space $\widetilde{B_{2,2}^0}$ factorizes through embedding $\ell_\Phi\hookrightarrow\ell_2$ for the function $\Phi$ that fulfils the following conditions:
\begin{enumerate}[(i)]
\item $\frac{1}{t}\leq \frac{\Psi(t)}{\Phi^{-1}(t^2)}$ for every $t\in\mathbb{R}_+$,
\item the function $x\mapsto \Phi(\sqrt{x})$, $x\in\mathbb{R}_+$, is concave,
\item $\Phi(a)\Phi(b)\leq\Phi(ab)$ for all real numbers $0<a<1\leq ab<b$,
\item $\Phi^{-1}(1)=1$.
\end{enumerate}
\end{theorem}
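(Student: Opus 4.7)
The plan is to derive Theorem \ref{small_summing} as a direct corollary of Theorem \ref{summing_embedding}, so the main task is to check that the four hypotheses of the latter follow from the four hypotheses of the former. Conditions (i) and (ii) are literally identical. Condition (iii) of Theorem \ref{small_summing} is exactly condition (iii) of Theorem \ref{summing_embedding} with the constant $C=1$, so nothing is needed there either. The only real work is to deduce condition (iv) of Theorem \ref{summing_embedding}, namely the existence of a constant $C$ such that $1\leq C\,\Phi^{-1}(x)\Phi^{-1}(1/x)$ for all $x>0$, from the new normalization assumption $\Phi^{-1}(1)=1$ together with the restricted supermultiplicativity in (iii).

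The core reduction runs as follows. By the symmetry $x\leftrightarrow 1/x$ in the quantity $\Phi^{-1}(x)\Phi^{-1}(1/x)$, it suffices to treat $0<x<1$; the case $x=1$ follows directly from $\Phi^{-1}(1)=1$. Set $a=\Phi^{-1}(x)$; because $\Phi^{-1}$ is strictly increasing and $\Phi^{-1}(1)=1$, we have $0<a<1$. Choose $b=1/a>1$. Then $ab=1$, which sits on the boundary of the range where (iii) is assumed. A routine continuity argument (replace $b$ by $(1+\varepsilon)/a$, apply (iii), and let $\varepsilon\downarrow 0$, using continuity of $\Phi$ at $1$) yields
$$\Phi(a)\,\Phi(b)\leq \Phi(ab)=\Phi(1)=1.$$
Substituting back, this reads $x\cdot\Phi\!\bigl(1/\Phi^{-1}(x)\bigr)\leq 1$, so $\Phi\!\bigl(1/\Phi^{-1}(x)\bigr)\leq 1/x$. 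Applying the increasing function $\Phi^{-1}$ to both sides gives $1/\Phi^{-1}(x)\leq \Phi^{-1}(1/x)$, which rearranges to the desired bound $1\leq \Phi^{-1}(x)\Phi^{-1}(1/x)$. Thus condition (iv) of Theorem \ref{summing_embedding} is satisfied, in fact with the same constant $C=1$ as appears in condition (iii).

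With all four hypotheses of Theorem \ref{summing_embedding} verified, an application of that theorem produces the required factorization of the embedding $\widetilde{B_{\Phi,1}^\Psi}\hookrightarrow \widetilde{B_{2,2}^1}$ through $\ell_\Phi\hookrightarrow\ell_2$, finishing the proof. The only mildly delicate point is the boundary case $ab=1$ in the supermultiplicativity assumption, and this is handled by the short continuity argument indicated above; there is no substantive obstacle in the reduction.
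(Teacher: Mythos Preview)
Your proof is correct and follows essentially the same route as the paper: both reduce Theorem~\ref{small_summing} to Theorem~\ref{summing_embedding} by showing that the restricted supermultiplicativity (iii) together with $\Phi^{-1}(1)=1$ forces $1\le\Phi^{-1}(x)\Phi^{-1}(1/x)$. The paper phrases this as passing from $\Phi(a)\Phi(b)\le\Phi(ab)$ to $\Phi^{-1}(xy)\le\Phi^{-1}(x)\Phi^{-1}(y)$ and then taking $y=1/x$, which is the same computation you do with $b=1/a$; your treatment of the boundary case $ab=1$ via continuity is in fact more careful than the paper's, which glosses over it.
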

\begin{proof}
It is enough to notice that the condition $\Phi(a)\Phi(b)\leq\Phi(ab)$, for $0<a<1\leq ab<b$ implies, by the monotonicity of the function $\Phi^{-1}$, the inequality
$$\Phi^{-1}(xy)\leq\Phi^{-1}(x)\Phi^{-1}(y),$$
where $x=\Phi(a)$ and $y=\Phi(b)$, for
$$0<x<1\leq xy<y.$$
By the property of Young functions we can take $y=1/x$ to get condition $(iv)$ of Theorem \ref{summing_embedding}.
\end{proof}

Now we can turn to the proof of Theorem \ref{glowne_twierdzenie}.
\begin{proof}[Proof of Theorem \ref{glowne_twierdzenie}]

The steps of the proof are described by diagram \eqref{diagram}. 
Let us set
\begin{equation}
\label{Phi}
\Psi(t)=\frac{\Phi^{-1}(t^2)}{t}.    
\end{equation}
Then condition $1.$ form Theorem \ref{glowne_twierdzenie} takes the form of condition \eqref{Kolyada_nier} from Theorem \ref{Kolyada}. This means that the embedding denoted by arrow $A$ in diagram \eqref{diagram} is a bounded operator. Arrow $B$ on the same diagram describe bounded operator by Theorem \ref{comparison}. Boundedness of operator $C$ in diagram \eqref{diagram} is a consequence of Theorem \ref{summing_embedding}. Assumptions $(ii)$ to $(iv)$ of this theorem are the same as assumptions $2.$, $3.$ and $4.$ of Theorem \ref{glowne_twierdzenie}. Moreover, assumption $(i)$ of Theorem \ref{summing_embedding} is satisfies because of equation \eqref{Phi}. Theorem \ref{summing_embedding} also guarantee that the embedding denoted by $C$ in diagram \eqref{diagram} factorizes through embedding $\ell_\Phi\hookrightarrow\ell_2$ and thus the Sobolev embedding operator also factorizes this way.
\end{proof}

\section{An example of Orlicz function}
We will construct a functions which satisfies the assumptions of Theorem \ref{glowne_twierdzenie}. Let
$$\Phi^{-1}(t)=
\begin{cases} 
t\cdot e^{\alpha\frac{\ln(1/\sqrt{t})}{\ln\ln(1/\sqrt{t})}}
&\text{ dla } t\in[0,1/r) \\
pt+q
&\text{ dla } t\in[1/r,r) \\
t\cdot e^{-\alpha\frac{\ln\sqrt{t}}{\ln\ln\sqrt{t}}}
&\text{ dla } t\geq r
\end{cases}.$$
We define 
$$r=e^{2e^2}$$
and we want from $\alpha$ to be smaller than $e^{-2}$. Then
$$\alpha\frac{\ln r}{\ln\ln r}\leq 1.$$
Moreover we choose $p$ and $q$ in such a way that the function $\Phi^{-1}$ is continuous. So, it is not hard to see that 
\begin{eqnarray*}
p=\frac{re^{-\alpha\frac{\ln(\sqrt{r})}{\ln\ln(\sqrt{r})}}-\frac{1}{r}e^{\alpha\frac{\ln(\sqrt{r})}{\ln\ln(\sqrt{r})}}}{r-1/r}
\mbox{ and }
q=re^{-\alpha\frac{\ln(\sqrt{r})}{\ln\ln(\sqrt{r})}}-pr.
\end{eqnarray*}

Notice that we have the following estimates
\begin{eqnarray}\label{pq}
\frac{1}{2e}\leq\frac{re^{-\alpha\frac{\ln(\sqrt{r})}{\ln\ln(\sqrt{r})}}}{2r}<p<\frac{r-1/r}{r-1/r}=1
\mbox{ and }
0<q\leq\frac{10}{r}p.
\end{eqnarray}
We also have the estimate
$$t^{1-\frac{\alpha}{2}}=t\cdot e^{-\frac{\alpha}{2}\ln t}\leq t\cdot e^{-\alpha\frac{\ln\sqrt{t}}{\ln\ln\sqrt{t}}}\leq t\cdot e^{-\alpha}$$
for $t\geq e^{2e}$.

First notice that condition 4 of Theorem \ref{glowne_twierdzenie} is clearly fulfilled:
$$\Phi^{-1}(x)\cdot \Phi^{-1}(1/x)=1,$$
for all sufficiently large positive values $x$.

Now we will check the integral condition 1 from Theorem \ref{glowne_twierdzenie}, that is we will show the existence of the constant $C>0$ such that for every $s\geq 1$ we have
$$\frac{s}{\Phi^{-1}(s^2)}\int_1^s\frac{\Phi^{-1}(t^2)}{t^2}\, dt+\int_s^\infty\frac{\Phi^{-1}(t^2)s}{t^2\Phi^{-1}(ts)}\,dt<C.$$
We will consider two cases:
\begin{enumerate}
\item $1\leq s\leq r$,
\item $r<s$.
\end{enumerate}

In case 1, by \eqref{pq} we get
\begin{eqnarray*}
\frac{s}{\Phi^{-1}(s^2)}\int_1^s\frac{\Phi^{-1}(t^2)}{t^2}\, dt
&=&
\frac{s}{ps^2+q}\int_1^s\frac{pt^2+q}{t^2}\,dt \\
&=&
\frac{s}{ps^2+q}\left(ps-\frac{q}{s}-p+q\right) \\
&\leq&
1+\frac{q(s-2)}{ps^2+q} \\
&\leq&
2
\end{eqnarray*}
and for the second integral we will use the substitutions $\ln t=x$ and $\ln r=k$. We then get
\begin{eqnarray*}
\int_s^\infty\frac{\Phi^{-1}(t^2)s}{t^2\Phi^{-1}(ts)}\,dt
&=&
\int_s^r\frac{\Phi^{-1}(t^2)s}{t^2\Phi^{-1}(ts)}\,dt+\int_r^\infty\frac{\Phi^{-1}(t^2)s}{t^2\Phi^{-1}(ts)}\,dt \\
&=&
\int_s^r\frac{(pt^2+q)s}{t^2(pts+q)}\,dt+\int_r^\infty\frac{1}{t}e^{\alpha\big(\frac{\ln\sqrt{st}}{\ln\ln\sqrt{st}}-\frac{\ln t}{\ln\ln t}\big)}\,dt \\ 
&\leq&
\int_s^r\frac{(pts+q)t}{t^2(pts+q)}\,dt+\int_k^\infty e^{\alpha\big(\frac{1/2(x+k)}{\ln(x+k)-\ln 2}-\frac{x}{\ln x}\big)}\,dx \\
&\leq&
\ln{r}+\int_k^\infty e^{\alpha\cdot\frac{x\ln x+k\ln x-2x\ln x-2x\ln 2}{2(\ln x-\ln 2)\ln x}}\,dx \\
&=&
2e^2+\int_k^\infty e^{\alpha\cdot\frac{(k-x)\ln x-2x\ln 2}{2(\ln x-\ln 2)\ln x}}\,dx \\
&\leq&
2e^2+\int_k^\infty e^{\alpha\cdot\frac{-2x\ln 2}{2(\ln x-\ln 2)\ln x}}\,dx \\
&<&
\infty
\end{eqnarray*}

Now we will deal with the second case, $r<s$. Let us define $\beta$ to be equal $\frac{\alpha}{\ln\ln s}$. Then we have 
$$e^{-\alpha\cdot\frac{\ln t}{\ln\ln t}}\leq t^{-\beta},$$
for $t<s$. By \eqref{pq} we can calculate
\begin{eqnarray*}
\frac{s}{\Phi^{-1}(s^2)}\int_1^s\frac{\Phi^{-1}(t^2)}{t^2}\, dt
&=&
\frac{s}{\Phi^{-1}(s^2)}\int_1^r\frac{\Phi^{-1}(t^2)}{t^2}\, dt+\frac{s}{\Phi^{-1}(s^2)}\int_r^s\frac{\Phi^{-1}(t^2)}{t^2}\, dt \\
&\leq&
\frac{s}{ps^2+q}\int_1^s\frac{pt^2+q}{t^2}\,dt+\frac{1}{s}e^{\alpha\frac{\ln s}{\ln\ln s}}\int_r^s e^{-\alpha\frac{\ln t}{\ln\ln t}}\,dt \\
&\leq&
2+e^{(\beta-1)\cdot e^{\alpha/\beta}}\int_r^{e^{e^{\alpha/\beta}}} e^{-\beta\ln t}\,dt \\
&=&
2+e^{(\beta-1)\cdot e^{\alpha/\beta}}\cdot\frac{1}{1-\beta}\cdot\Big(e^{(1-\beta)\cdot e^{\alpha/\beta}}-r^{1-\beta}\Big) \\
&\leq&
2+\frac{1}{1-\beta} \\
&\leq&
2+\frac{1}{1-\frac{\alpha}{\ln\ln r}}.
\end{eqnarray*}

Moreover, as we show in case 1, the estimate
$$\int_s^\infty\frac{\Phi^{-1}(t^2)s}{t^2\Phi^{-1}(ts)}\,dt\leq\int_r^\infty\frac{\Phi^{-1}(t^2)s}{t^2\Phi^{-1}(ts)}\,dt<\infty$$
holds.

For condition 2 of Theorem \ref{glowne_twierdzenie} notice that concavity of the function $x\mapsto\Phi(\sqrt{x})$ is equivalent to the statement that function $y\mapsto\big(\Phi^{-1}(x)\big)^2$ is convex. Let us put $f:[0,\infty)\rightarrow [0,\infty)$, $f(x)=\left(\Phi^{-1}(x)\right)^2$ and $y(x)=\ln(1/\sqrt{x})$. We have for $x\in(0,1/r)$
$$\frac{d^2}{dx^2}f(x)=e^{2\alpha\frac{y(x)}{\ln(y(x))}}
\left[2+\alpha\left(-3\frac{\ln(y(x))-1}{(\ln(y(x)))^2}+\frac{2-\ln(y(x))}{2(\ln(y(x)))^3y(x)}+\alpha\frac{(\ln(y(x))-1)^2}{(\ln(y(x)))^4}\right)\right],$$
for $x\in(1/r,r)$ we have
$$\frac{d^2}{dx^2}f(x)=2p^2$$
and for $x\in(r,\infty)$ we have
$$\frac{d^2}{dx^2}f(x)=e^{-2\alpha\frac{\ln{\sqrt{x}}}{\ln{\ln{\sqrt{x}}}}}
\left[2+\alpha\left(-3\frac{\ln{\ln{\sqrt{x}}}-1}{(\ln{\ln{\sqrt{x}}})^2}+\frac{\ln{\ln{\sqrt{x}}}-2}{2(\ln{\ln{\sqrt{x}}})^3\ln{\sqrt{x}}}+\alpha\frac{(\ln{\ln{\sqrt{x}}}-1)^2}{(\ln{\ln{\sqrt{x}}})^4}\right)\right],$$
So we see that it is enough to choose $\alpha>0$ sufficiently small to guaranty positivity of second derivative of the function $f$.

Now we can check the sup-multiplicative condition 3 of Theorem \ref{glowne_twierdzenie} for function $\Phi$: there is constant $C$ such that $\Phi(Cab)\geq \Phi(a)\cdot\Phi(b)$, for all $0<a<1\leq ab<b$. Before we go into further considerations, we need one more simple observation which will show that sup-multiplicativity of the function $\Phi$ is equivalent to sub-multiplicativity of the function $\Phi^{-1}$.
We will use the following lemma:
\begin{lemma}\label{multiplikatywnosc}
Let $\Phi$ be a Young function. Moreover let $0<x<\Phi(1)<y$, $1\leq\Phi^{-1}(x)\Phi^{-1}(y)$ and $C\geq 1$ be some constant. Then 
$$\Phi^{-1}(xy)\leq C\Phi^{-1}(x)\Phi^{-1}(y)$$
implies that
$$\Phi(a)\Phi(b)\leq\Phi(Cab),$$
where $a$ and $b$ are positive numbers such that $\Phi^{-1}(x)=a$ and $\Phi^{-1}(y)=b$ and
$$0<a<1\leq ab<b.$$ 
\end{lemma}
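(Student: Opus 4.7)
The plan is essentially a one-step substitution, exploiting the fact that $\Phi$ and $\Phi^{-1}$ are mutually inverse strictly increasing continuous bijections on $[0,\infty)$. I would first translate the hypothesized range conditions on $x,y$ into conditions on $a=\Phi^{-1}(x)$ and $b=\Phi^{-1}(y)$: since $\Phi^{-1}$ is strictly increasing with $\Phi^{-1}(\Phi(1))=1$, the inequalities $0<x<\Phi(1)<y$ give $0<a<1<b$, while the assumption $1\le \Phi^{-1}(x)\Phi^{-1}(y)=ab$ together with $a<1$ yields $1\le ab<b$, exactly matching the constraint $0<a<1\le ab<b$ from the conclusion.

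Next I would apply $\Phi$ to both sides of the assumed sub-multiplicative inequality
\[
\Phi^{-1}(xy)\le C\,\Phi^{-1}(x)\Phi^{-1}(y).
\]
Because $\Phi$ is monotonically nondecreasing this yields $xy\le \Phi\bigl(C\Phi^{-1}(x)\Phi^{-1}(y)\bigr)=\Phi(Cab)$. Since $\Phi(a)=\Phi(\Phi^{-1}(x))=x$ and analogously $\Phi(b)=y$, the left side is exactly $\Phi(a)\Phi(b)$, which gives the desired conclusion
\[
\Phi(a)\Phi(b)\le \Phi(Cab).
\]

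There is really no obstacle here; the only point that requires any care is verifying that the arguments of $\Phi$ and $\Phi^{-1}$ lie in the appropriate ranges so that the identities $\Phi(\Phi^{-1}(t))=t$ and $\Phi^{-1}(\Phi(t))=t$ may be applied without reservation. This is guaranteed by the definition of a Young function adopted in Section 2 (continuous, strictly increasing, with $\Phi(0)=0$ and $\lim_{t\to\infty}\Phi(t)=\infty$, the latter following from $\lim_{t\to\infty}t/\Phi(t)=0$), which makes $\Phi:[0,\infty)\to[0,\infty)$ a homeomorphism. Thus the whole content of the lemma is that super-multiplicativity of $\Phi$ and sub-multiplicativity of $\Phi^{-1}$ are formally equivalent on the corresponding regions, and the proof reduces to a direct application of $\Phi$ to the hypothesized inequality.
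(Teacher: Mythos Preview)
Your proof is correct and follows essentially the same approach as the paper: apply the increasing function $\Phi$ to both sides of the assumed inequality, then substitute $x=\Phi(a)$, $y=\Phi(b)$, and finally observe that the range conditions $0<a<1\le ab<b$ follow from the hypotheses on $x,y$. You have in fact supplied slightly more detail than the paper on why $\Phi$ is a bijection and how the range conditions transfer.
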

\begin{proof}
Function $\Phi$ is increasing, so if we act this functions on both sides of the inequality in the assumption, then we get
$$xy\leq \Phi\big(C\Phi^{-1}(x)\Phi^{-1}(y)\big).$$
Now we use the assumption, that $\Phi^{-1}(x)=a$ and $\Phi^{-1}(y)=b$, to get
$$\Phi(a)\Phi(b)\leq\Phi(Cab).$$
The condition $0<a<1\leq ab<b$ are consequences of the assumptions.
\end{proof}

Now we will try to find a constant $C>0$ such that
$$\Phi^{-1}(xy)\leq C\Phi^{-1}(x)\Phi^{-1}(y)$$
for  $0<x<\Phi(1)<y$, $1\leq\Phi^{-1}(x)\Phi^{-1}(y)$ and $C\geq 1$.

We will consider three cases:
\begin{enumerate}
\item $xy\leq r$,
\item $0<x<1/r<r<xy<y$,
\item $1/r<x<r<xy<y$,
\end{enumerate}
Now we will check all these cases. 

1. For this case we can see that
$$\Phi^{-1}(xy)\leq\Phi^{-1}(r)\leq r\leq r\Phi^{-1}(x)\Phi^{-1}(y).$$

2. Our condition for the above function takes the form:
$$xye^{-\alpha\frac{\ln{\sqrt{xy}}}{\ln\ln\sqrt{xy}}}\leq xe^{\alpha\frac{\ln(1/\sqrt{x})}{\ln\ln(1/\sqrt{x})}} ye^{-\alpha\frac{\ln\sqrt{y}}{\ln\ln\sqrt{y}}}.$$
After small manipulations we get the condition:
$$\frac{\ln\sqrt{xy}}{\ln\ln\sqrt{xy}}\geq-\frac{\ln(1/\sqrt{x})}{\ln\ln(1/\sqrt{x})}+\frac{\ln\sqrt{y}}{\ln\ln\sqrt{y}}.$$
Now we use substitutions $a=\ln\sqrt{x}$ and $b=\ln\sqrt{y}$. Notice that if $r\geq e^{2e^2}$, then we have $a<-e^2$, $b>e^2$ and $a+b>e^2$. So we get
$$\frac{a+b}{\ln(a+b)}\geq -\frac{-a}{\ln(-a)}+\frac{b}{\ln b}.$$
For convenience we assume that $c=-a>e^2$. Then our equation has the form
$$\frac{b-c}{\ln(b-c)}\geq-\frac{c}{\ln c}+\frac{b}{\ln b},$$
in other form
$$\frac{b-c}{\ln(b-c)}+\frac{c}{\ln c}\geq\frac{b}{\ln b},$$
where $b,c,b-c>e^2$. Now notice that for $t>e^2$ function $f(t)=\frac{t}{\ln t}$ in concave. We can also redefine it on the positive neighbourhood of zero in such a way that it will be concave on the whole half-line $t\geq0$ and satisfies the condition $f(0)\geq0$. Then this function, which we will also denote by $f$, would satisfies the condition $f(u)+f(v)\geq f(u+v)$, where $u$ and $v$ are two positive numbers. This condition is equivalent to
$$f(b-c)+f(c)\geq f(b).$$
So we can write
$$\frac{b-c}{\ln(b-c)}+\frac{c}{\ln c}\geq\frac{b}{\ln b}.$$

3. The last case. We will prove that
$$xye^{-\alpha\frac{\ln(\sqrt{xy})}{\ln\ln(\sqrt{xy})}}\leq\frac{e^{\alpha\frac{\ln(\sqrt{r})}{\ln\ln(\sqrt{r})}}}{p}(px+q)ye^{-\alpha\frac{\ln(\sqrt{y})}{\ln\ln(\sqrt{y})}}$$
for every $1/r\leq x<r<xy<y$. Using an estimation similar to the one from point 2 and an inequality $\frac{1}{x}\leq r$ we get
$$xye^{-\alpha\frac{\ln(\sqrt{xy})}{\ln\ln(\sqrt{xy})}}\leq e^{\alpha\big(\frac{\ln(\sqrt{y})}{\ln\ln(\sqrt{y})}-\frac{\ln(\sqrt{xy})}{\ln\ln(\sqrt{xy})}\big)}\frac{px+q}{p}ye^{-\alpha\frac{\ln(\sqrt{y})}{\ln\ln(\sqrt{y})}}\leq \frac{e^{\alpha\frac{\ln(\sqrt{r})}{\ln\ln(\sqrt{r})}}}{p}(px+q)ye^{-\alpha\frac{\ln(\sqrt{y})}{\ln\ln(\sqrt{y})}}.$$

From the above calculations we see that the inequality
$$\Phi^{-1}(x\cdot y)\leq C\Phi^{-1}(x)\cdot\Phi^{-1}(y),$$
is true for all $x, y$ such that $0<x<\Phi(1)<y$, $1\leq\Phi^{-1}(x)\Phi^{-1}(y)$, where C is the largest of the values
$$r, 1, \frac{e^{\alpha\frac{\ln(\sqrt{r})}{\ln\ln(\sqrt{r})}}}{p}.$$
The last value can be estimate from above by
$$\frac{e}{p}\leq 2e^2$$
because $\frac{1}{2e}<p$ and $\alpha\frac{\ln r}{\ln\ln r}\leq 1$.
So $C=r=e^{2e^2}$ will be good.

\section{$(\Phi,1)$-summing embedding via extrapolation}

In this section we will prove Theorem \ref{invariantnafaktor}. The proof uses estimates obtained in \cite{Woj} and extrapolation techniques.

We denote by $L^1_{w} (I)$ the space of functions integrable with weight $w$ on the interval $I$.
One of the key ingredients in the proof of Theorem \ref{invariantnafaktor} is a simple extrapolation lemma for sequence spaces inspired by the Theorem of Yano \cite{Yano}.
\begin{lemma}
\label{one}
Let $\|x\|^p_{\ell^p}\leq f(p)$ for $q+\varepsilon>p>q\geq 1$ and $f \in L^1_{w} ([q,q+\varepsilon])$, where $w(p)=(p-q)^{\alpha}$ and $\alpha>-1$. Then $x\in \ell^{\Phi}$ where $\Phi(x)=\frac{x^q}{|\ln(x)|^{\alpha+1}}$ for $0 \leq x \leq \frac{1}{2}$.
\end{lemma}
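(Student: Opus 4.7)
The plan is a standard Yano-style extrapolation argument: I will represent the Orlicz function $\Phi(y)=y^q/|\ln y|^{\alpha+1}$ as a weighted average (in the parameter $p$) of pure power functions $y^{p}$, which converts the Orlicz sum $\sum_n\Phi(|x_n|)$ into a weighted integral of $\ell^p$-norms that the hypothesis already controls.

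The first step is the elementary identity, obtained via the substitution $v=u|\ln y|$: for every $y\in(0,\tfrac12]$,
\[
\int_0^{\varepsilon} u^{\alpha}y^{u}\,du=\frac{1}{|\ln y|^{\alpha+1}}\int_0^{\varepsilon|\ln y|} v^{\alpha}e^{-v}\,dv.
\]
Since $|\ln y|\ge\ln 2$ in this range and $\alpha>-1$ ensures $v^{\alpha}e^{-v}$ is integrable at $0$, the inner integral is bounded below by the positive constant $c_{1}:=\int_{0}^{\varepsilon\ln 2} v^{\alpha}e^{-v}\,dv$. This yields the key comparison
\[
\frac{1}{|\ln y|^{\alpha+1}}\le \frac{1}{c_{1}}\int_0^{\varepsilon} u^{\alpha}y^{u}\,du,\qquad 0<y\le \tfrac12.
\]

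The second step is to rescale so that every coordinate lies in the comfortable range. Since $\|x\|_{\ell^p}<\infty$ for at least one $p\in(q,q+\varepsilon)$, in particular $\sup_n|x_n|<\infty$, I can fix $\lambda\ge 1$ large enough that $y_n:=|x_n|/\lambda\le\tfrac12$ for all $n$. Applying the pointwise bound termwise and interchanging sum and integral via Tonelli,
\[
\sum_n\Phi(y_n)\le\frac{1}{c_{1}}\int_0^{\varepsilon} u^{\alpha}\sum_n y_n^{q+u}\,du=\frac{1}{c_{1}}\int_q^{q+\varepsilon}(p-q)^{\alpha}\,\|y\|_{\ell^p}^{p}\,dp.
\]
Using $\|y\|_{\ell^p}^{p}=\lambda^{-p}\|x\|_{\ell^p}^{p}\le\lambda^{-q}f(p)$ for $\lambda\ge 1$ and $p\ge q$, the right-hand side is at most
\[
\frac{1}{c_{1}\lambda^{q}}\int_q^{q+\varepsilon} w(p)f(p)\,dp,
\]
which is finite by the assumption $f\in L^{1}_{w}([q,q+\varepsilon])$. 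Choosing $\lambda$ large enough to make this bound at most $1$ gives $\|x\|_{\ell^{\Phi}}\le\lambda<\infty$, so $x\in\ell^{\Phi}$.

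I do not foresee a genuinely hard step. The hypothesis $\alpha>-1$ enters in exactly one place, to guarantee integrability of $v^{\alpha}e^{-v}$ near zero, and it is precisely what produces the exponent $\alpha+1$ on $|\ln y|$ in $\Phi$. The range restriction $0\le y\le\tfrac12$ in the definition of $\Phi$ is absorbed by the global rescaling, and extending $\Phi$ to a genuine Young function on all of $[0,\infty)$ is routine and irrelevant to the finiteness of the Luxemburg norm.
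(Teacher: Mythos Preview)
Your proof is correct and follows essentially the same Yano-type extrapolation as the paper: both hinge on the identity $\int_0^{\varepsilon}u^{\alpha}y^{u}\,du=|\ln y|^{-(\alpha+1)}\int_0^{\varepsilon|\ln y|}v^{\alpha}e^{-v}\,dv$ and the uniform lower bound $c_1=\int_0^{\varepsilon\ln 2}v^{\alpha}e^{-v}\,dv$. The only cosmetic difference is that the paper first groups the coordinates into level sets $K_n=\{k:1/n<x_k\le 1/(n-1)\}$ before integrating in $p$, whereas you apply the pointwise comparison directly and invoke Tonelli, which is a mild streamlining of the same argument.
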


\begin{proof}
We normalize the sequence $x$ in such a way that $f(q+\varepsilon)=\frac{1}{2}$. Hence $x_k\leq \frac{1}{2}$ for any $k$. We define $K_n=\{x_k:  \frac{1}{n}< x_k\leq\frac{1}{n-1}\}$, for $n=3,4, ...$. Observe that the following inequality is satisfied
\[
\sum_{n=3}^\infty \# K_n \frac{1}{n^p} \leq \|x\|^p_{\ell^p} \leq f(p) .
\]
Hence 
\[
\sum_{n=3}^\infty \# K_n \frac{1}{n^p} (p-q)^{\alpha}\leq f(p)(p-q)^{\alpha}.
\]
We integrate the inequality on the interval $(q,q+\varepsilon)$
\[
\sum_{n=3}^\infty \# K_n \frac{1}{n^q} \int_{q}^{q+\varepsilon} \frac{1}{n^{p-q}} (p-q)^{\alpha}\,dp\leq \int_{q}^{q+\varepsilon}f(p)(p-q)^{\alpha}dp< +\infty.
\]
We estimate the left hand side of the inequality 
\[
\begin{split}
\sum_{n=3}^\infty \# K_n \frac{1}{n^q} \int_{q}^{q+\varepsilon} \frac{1}{n^{p-q}} (p-q)^{\alpha}\,dp
&= \sum_{n=2}^\infty \# K_n \frac{1}{n^q} \int_0^{\varepsilon} e^{-t \ln(n)} t^{\alpha} dt
\\&= \sum_{n=3}^\infty \# K_n \frac{1}{n^q\ln^{\alpha}(n)} \int_0^{\varepsilon} e^{-t \ln(n)} (t\ln(n))^{\alpha} dt
\\&=\sum_{n=3}^\infty \# K_n \frac{1}{n^q\ln^{\alpha+1}(n)} \int_0^{\varepsilon \ln(n)} e^{-s} s^{\alpha} ds 
\\&\geq \left(\int_{0}^{\varepsilon\ln(2)} e^{-s} s^{\alpha}ds\right) \sum_{n=3}^\infty\# K_n \frac{1}{n^q\ln^{\alpha+1}(n)}
\\ &\geq C \sum \frac{(x_k)^q}{|\ln(x_k)|^{\alpha+1}.} 
\end{split} 
\]
Therefore $x\in \ell_{\Phi}$.
\end{proof}
As a consequence we obtain:
\begin{theorem}\label{extrapol}
Let operator $T\in \Pi_{v,1}$ for $v\in (v_1,v_2)$. If there exists $\alpha \geq -1$ such that
\[
\int_{v_1}^{v_2} \left(\pi_{v,1}(T)\right)^{v} (v-v_1)^{\alpha} ds = K
\]
then $T\in \Pi_{\Phi,1}$, where $\Phi(x)=\frac{x^{v_1}}{|\ln(x)|^{\alpha+1}}$ for $0 \leq x \leq \frac{1}{2}$.
\end{theorem}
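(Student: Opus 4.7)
The plan is to reduce Theorem \ref{extrapol} to Lemma \ref{one}, applied term-wise to the sequence of image norms. Fix a finite family $x_1,\dots,x_n\in X$ and set
$$W=\sup_{x^*\in B_{X^*}}\sum_{i=1}^n|x^*(x_i)|.$$
If $W=0$ then $Tx_i=0$ for every $i$ and the conclusion is trivial, so assume $W>0$. The $(v,1)$-summing hypothesis gives, for every $v\in(v_1,v_2)$,
$$\sum_{i=1}^n||Tx_i||_Y^{v}\leq(\pi_{v,1}(T))^{v}W^{v},$$
so the normalized sequence $z=(z_i)$ with $z_i:=||Tx_i||_Y/W$ satisfies $||z||^{v}_{\ell^{v}}\leq f(v)$ for $f(v):=(\pi_{v,1}(T))^{v}$.

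The integrability assumption of the theorem is exactly $f\in L^1_{w}([v_1,v_2])$ for $w(v)=(v-v_1)^{\alpha}$, so Lemma \ref{one} applies with $q=v_1$ and $\varepsilon=v_2-v_1$. Tracking the constants through its proof one obtains not just $z\in\ell_{\Phi}$ but the quantitative modular bound $\sum_i\Phi(z_i)\leq C_0$, where $C_0$ depends only on $K$, $v_1$, $v_2$ and $\alpha$ (through the factor $\int_0^{\varepsilon\ln 2}e^{-s}s^{\alpha}\,ds$ appearing in the proof of the lemma). By the Luxemburg definition of the norm this translates into $||z||_{\ell_{\Phi}}\leq C$, and hence
$$\left\Vert(||Tx_i||_Y)_{i=1}^{n}\right\Vert_{\ell_{\Phi}}\leq C\cdot W,$$
which is the $(\Phi,1)$-summing estimate with $\pi_{\Phi,1}(T)\leq C$.

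The only delicate point is the normalization used inside Lemma \ref{one}: one first rescales so that $f(v_2)=1/2$, which ensures that every entry of the rescaled sequence lies in $[0,1/2]$, the region in which $\Phi(x)=x^{v_1}/|\ln x|^{\alpha+1}$ is the relevant piece of the Young function. Applying this rescaling to $z$ by the factor $\lambda=(2f(v_2))^{-1/v_2}$, propagating it through the integral bound (the weighted integral is multiplied by at most $\max(1,\lambda^{v_2})$), and finally absorbing $\lambda^{-1}$ into the constant, is routine bookkeeping rather than a real obstacle. The substantive content of the argument sits entirely inside Lemma \ref{one}; Theorem \ref{extrapol} is its operator-theoretic packaging.
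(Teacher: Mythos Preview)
Your proposal is correct and follows essentially the same route as the paper: normalize the weak $\ell_1$ quantity to $1$, set $y_i=\|Tx_i\|_Y$, bound $\|y\|_{\ell_v}^v$ by $f(v)=(\pi_{v,1}(T))^v$, and invoke Lemma~\ref{one}. The only difference is cosmetic---you carry the factor $W$ explicitly and spell out the bookkeeping around the rescaling $f(v_2)=1/2$ that the paper's proof of Lemma~\ref{one} performs tacitly---so there is nothing substantively new or different in your argument.
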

\begin{proof}
Let $\{x_i\}_{i=1}^{n}$ be sequence in $X$ such that 
\[
\sup\limits_{\|x^*\|_{X^*}= 1}\; \sum_{i=1}^n |x^*(x_i)|=1  
\]
We put $y=(\|T x_1\|_{Y},\|T x_2\|_{Y},\ldots , \|T x_n\|_{Y})$. 
From $(v,1)$-summability of $T$ we get
\[
\|y\|_{\ell_v}\leq \pi_{v,1}(T),
\]
where $v\in (v_1,v_2)$.
Since the function $f(v)=\pi_{v,1}(T)$ is in $L^1_w(v_1,v_2)$, where $w=(v-v_1)^{\alpha}$. From Lemma \ref{one} we get $x\in \ell_{\Phi}$.
\[
\|y\|_{\ell^{\Phi}}\leq C(K,\alpha,|v_1 -v_2|)
\]
The $(\Phi,1)$-summing property follows from the definition of the sequences $y$.
\end{proof}

Similarly as in \cite{Woj} Theorem \ref{invariantnafaktor} is a direct consequence of the following factorization of the Sobolev embedding and the Theorem \ref{extrapol}.
\begin{center}
\begin{tikzpicture}
  \matrix (m) [matrix of math nodes,row sep=3em,column sep=4em,minimum width=2em]
  {
     W^{k,p}(\mathbb{T}^d)& B^{\theta}_{q,p}(\mathbb{T}^d) & B^{\theta}_{q,q}(\mathbb{T}^d) & B^{\lambda}_{r,r}(\mathbb{T}^d)& L_{s}(\mathbb{T})&  \\
  };
  \path[-stealth]
   (m-1-1) edge node [above] {$T_1$} (m-1-2)
    (m-1-2) edge  node [above] {$T_2$} (m-1-3)
      (m-1-3) edge node [above] {$T_3$} (m-1-4)
      (m-1-4)  edge node [above] {$T_4$} (m-1-5)
          ;
\end{tikzpicture}
\end{center}
where $p< q< 2$, $\theta=d (1/q+k/d-1/p)$, $\lambda=d(k/d+1/r-1/p)$ and $r=\min\{s,2\}$.

\begin{proof}[Proof of Theorem \ref{invariantnafaktor}]
To apply Theorem \ref{extrapol} we need a precise bounds on the value of $\pi_{v,1}(S_{d,k,p})$. We briefly reproduce steps of the argument in \cite{Woj}, keeping our attention on the growth of the norm with respect to $v$.\\
We fix $p$,$d$ and $k$. Following carefully the argument from \cite{Woj} and  \cite{PeWoj} we get an upper bound on the norm of the operator $T_1$. Indeed, by \cite{PeWoj} the embedding $Id: W^{1,1} \rightarrow B^{\tau(h,d,1)}_{h,1}$  satisfies the following inequality
\[
\left\|Id: W^{1,1} \rightarrow B^{\tau(h,d)}_{h,1}\right\|\leq  \frac{K}{(h-1)},
\]
where $\tau(h,d,k)=d(\frac{1}{h}+\frac{k}{d}-1).$
By Lemma 3 from  \cite{Woj} we know that the embedding $Id: W^{k,1} \rightarrow B^{\theta(h,d,k)}_{h,1}$ satisfies
\[
\left\|Id: W^{k,1} \rightarrow B^{\tau(h,d,k)}_{h,1}\right\|\leq  \frac{C_1}{(h-1)}.
\]
We use the complex interpolation. Let $\alpha=\frac{2}{p}-1$ and $\frac{1}{q}=\frac{\alpha}{h}+\frac{1-\alpha}{2}$. Since $\left(W^{k,2},W^{k,1}\right)_{\alpha}=W^{k,p}$ and $\left(B^k_{2,2},B^{\tau(h,d,k)}_{h,1}\right)_{\alpha}=B^{\theta}_{q,p}$ and
\[
\left\|Id: W^{k,2} \rightarrow B^{k}_{2,2}\right\|\leq  C_2,
\]
we obtain
\[
\|T_1\|\leq C_3 (q-p)^{1-\frac{2}{p}}.
\]
Since $q>p$ we have $\|T_2\|\leq 1$. Operator $T_3$ by the Marcinkiewicz sampling theorem factorizes uniformly through embedding $\ell_q \rightarrow \ell_r$ (cf. \cite{Woj}, Lemma 7). Indeed, by the Marcinkiewicz sampling theorem \cite[ Volume II, Theorem 7$\cdot$ 5, ($7\cdot6$), pp.~28-29]{Zyg}  
we have the inequality
\begin{equation}\label{eq: sampMarc}
\left\|\left(\phi_j*f\right)\right\|_{\ell_q}\leq  2^{\frac{jd}{q}} A\left\|  \phi_j* f\right\|_{L_q}.  
\end{equation}
Thus,
\[
\|f\|^q_{B^{\theta}_{q,q}}= \sum_{j=0}^{\infty} 2^{j\theta q } \|\phi_j* f\|_{L_q}^q = \sum_{j=0}^{\infty} 2^{jd} \left\| 2^{j (k-\frac{d}{p})} \phi_j* f\right\|_{L_q}^q \stackrel{\eqref{eq: sampMarc}}{\geq} \frac{1}{ A^{q}}\sum_{j=0}^{\infty} \left\|\left(2^{j (k-\frac{d}{p})} \phi_j* f\right)\right\|^q_{\ell_q}.
\]
Similarly by \cite[ Volume II, Theorem 7$\cdot$ 5, ($7\cdot7$), pp.~28-29]{Zyg} we have estimate with a constant depending only on $d$ and $r$:
\[
\begin{split}
\sum_{j=0}^{\infty} \left\|\left(2^{j (k-\frac{d}{p})} \phi_j* f\right)\right\|^r_{\ell_r} &\gtrsim
\sum_{j=0}^{\infty} 2^{jd} \left\|\left(2^{j (k-\frac{d}{p})} \phi_j* f\right)\right\|^r_{L_r}
\\&=\sum_{j=0}^{\infty} 2^{j(d+kr- \frac{dr}{p})} \left\| \phi_j* f\right\|^r_{L_r}= \sum_{j=0}^{\infty} 2^{j\lambda r} \left\| \phi_j* f\right\|^r_{L_r}= \|f\|_{B^{\lambda}_{r,r}}.
\end{split}
\]

Then by the Bennet-Carl theorem (see \cite{Pie} 1.6.6)  $Id:\ell_q\rightarrow\ell_r$ is a $(v,1)$-summing operator, where $\frac{1}{v}=\frac{1}{q}-\frac{1}{r}+\frac{1}{2}$ and  
\[
\pi_{v,1}(Id:\ell_q\rightarrow\ell_r)\leq K,  
\] 
where $K$ is independent of $p$. Therefore there exists $C_1>$ independent of $p$ such that
\[
\pi_{v,1}(T_3)\leq C_4,  
\] 
By (\cite{BeLe}, Th. 6.4.4 and Th. 6.5.1)  we have
\[
\|T_4\|\leq C_5.
\]
Therefore $S_{d,k,p}$ is a $(v,1)$-summing operator and
\[
\pi_{v,1}(S_{d,k,p})\leq C_6 (q-p)^{1-\frac{2}{p}}.
\]
We rewrite above inequality in terms of $p_0=\max\{\frac{2d}{2k+d},p\}$ and $v$. Since $r= \min\{s,2\}$, by a routine calculation we get
\[
\pi_{v,1}(S_{d,k,p})\leq K (v-p_0)^{1-\frac{2}{p}}.
\]
for $v\in (p_0,2)$. Therefore
\[
\int_{p_0}^{2} (\pi_{v,1}(S_{d,k,p}))^{v} (v-p_0)^{\alpha} dv <\infty
\]
for $\alpha > p_0(1-\frac{2}{p})-1 $. Applying the Theorem \ref{extrapol} we get the desired result.
\end{proof}


\bibliographystyle{plain}
\bibliography{biblio}

\begin{thebibliography}{10}

\bibitem{Ben}
Grahame Bennett.
\newblock Inclusion mappings between {$\ell^{p}$} spaces.
\newblock {\em J. Functional Analysis}, 13:20--27, 1973.

\bibitem{BeLe}
J\"{o}ran Bergh and J\"{o}rgen L\"{o}fstr\"{o}m.
\newblock {\em Interpolation spaces. {A}n introduction}.
\newblock Grundlehren der Mathematischen Wissenschaften, No. 223.
  Springer-Verlag, Berlin-New York, 1976.

\bibitem{Car}
Bernd Carl.
\newblock Absolut-{$(p,\,1)$}-summierende identische {O}peratoren von {$l_{u}$}
  in {$l_{v}$}.
\newblock {\em Mathematische Nachrichten}, 63:353--360, 1974.

\bibitem{DeMaMi}
Andreas Defant, Mieczys\l{}aw Masty\l{}o, and Carsten Michels.
\newblock Summing inclusion maps between symmetric sequence spaces.
\newblock {\em Transactions of the American Mathematical Society},
  354(11):4473--4492, 2002.

\bibitem{KrRu}
Mark~A. Krasnosel'skii and Yakov~B. Rutickii.
\newblock {\em Convex Functions and Orlicz Spaces}.
\newblock P. Noordfoff Ltd., Groningen, 1961.

\bibitem{MaMa}
Lech Maligranda and Mieczys\l{}aw Masty\l{}o.
\newblock Inclusion mappings between {O}rlicz sequence spaces.
\newblock {\em Journal of Functional Analysis}, 176(2):264--279, 2000.

\bibitem{PaWoj}
Aleksander Pawlewicz and Micha\l Wojciechowski.
\newblock Marcinkiewicz sampling theorem for {O}rlicz spaces.
\newblock {\em Positivity}, 26(3):Paper No. 56, 23, 2022.

\bibitem{PaWoj2}
Aleksander Pawlewicz and Micha\l Wojciechowski.
\newblock On the embedding of {$BV$} space into {B}esov-{O}rlicz space.
\newblock {\em J. Funct. Anal.}, 284(1):Paper No. 109739, 21, 2023.

\bibitem{Pe}
Jaak Peetre.
\newblock {\em New Thoughts on Besov Spaces}, volume~1 of {\em Duke Univ. Math.
  Ser.}
\newblock Durham, N. C., 1976.

\bibitem{PeWoj}
Aleksander Pe\l{}czy\'{n}ski and Micha\l{} Wojciechowski.
\newblock Molecular decompositions and embedding theorems for vector-valued
  {S}obolev spaces with gradient norm.
\newblock {\em Studia Mathematica}, 107(1):61--100, 1993.

\bibitem{PiSi}
Lubo\v{s} Pick and Sickel Winfried.
\newblock Several types of intermediate {B}esov-{O}rlicz spaces.
\newblock {\em Math. Nachr.}, 164:141--165, 1993.

\bibitem{Pie}
Albrecht Pietsch.
\newblock {\em Eigenvalues and {$s$}-numbers}, volume~13 of {\em Cambridge
  Studies in Advanced Mathematics}.
\newblock Cambridge University Press, Cambridge, 1987.

\bibitem{RaRe}
Malempati~M. Rao and Zhong~D. Ren.
\newblock {\em Theory of Orlicz Spaces}, volume 146 of {\em Monographs and
  Textbooks in Pure and Applied Mathematics}.
\newblock New York : M. Dekker, 1991.

\bibitem{Woj}
Micha\l{} Wojciechowski.
\newblock On the summing property of the {S}obolev embedding operators.
\newblock {\em Positivity}, 1(2):165--170, 1997.

\bibitem{Yano}
Shigeki Yano.
\newblock Notes on {F}ourier analysis. {XXIX}. {A}n extrapolation theorem.
\newblock {\em J. Math. Soc. Japan}, 3:296--305, 1951.

\bibitem{Zyg}
Antoni Zygmund.
\newblock {\em Trigonometric Series}.
\newblock Cambridge University Press, 1959.

\end{thebibliography}

\end{document}